\title{\LARGE \bf
Selective model-predictive control for flocking systems.
}
\numberwithin{equation}{section}
\newcolumntype{P}[1]{>{\centering\arraybackslash}p{#1}}
\newcolumntype{M}[1]{>{\centering\arraybackslash}m{#1}}
\newcommand{\vv}{\mathbf{v}}
\newcommand{\Id}{\mathrm{Id}}
\newcommand{\be}{\begin{equation}}
\newcommand{\ee}{\end{equation}}
\newcommand{\epsi}{\varepsilon}
\newcommand{\ba}{\begin{array}}
\newcommand{\ea}{\end{array}}
\newcommand{\pM}{\left(\begin{array}}
\newcommand{\Mp}{\end{array}\right)}
\newcommand{\LL}{{\bf L} }
\newcommand{\eee}{{\bf e}}
\newcommand{\A}{{\bf A}}
\newcommand{\BB}{{\bf B}}
\newcommand{\QQ}{{\bf S}}
\newcommand{\Rddd}{\mathbb{R}^{2d}\times\mathbb{R}^{2d}}
\newcommand{\Rdd}{\mathbb{R}^d\times\mathbb{R}^d}
\newcommand{\Qs}{S}
\newcommand{\sidecap}[1]{ {\begin{sideways}\parbox{3.cm}{\centering#1}\end{sideways}} }
\def\u{u}
\def\v{{v}^{n+1}}
\def\p{{p}^{n+1}}
\newtheorem{thm}{Theorem}[section]
\newtheorem{rmk}{Remark}[section]
\newtheorem{definition}{Definition}[section]
\newtheorem{lemma}{Lemma}[section]
\begin{document}
\title{
Selective model-predictive control for flocking systems.
}
\author[1]{Giacomo Albi \thanks{giacomo.albi@ma.tum.de, address: Boltzmannstr. 3, Garching bei M\"unchen,  D-85748, Germany}}
\author[2]{Lorenzo Pareschi \thanks{ lorenzo.pareschi@unife.it, address: via Machiavelli 35, Ferrara, IT-44121, Italy}}
\affil[1]{Department of Mathematics, TU M\"unchen}
\affil[2]{Department of  Mathematics and Computer Science, University of Ferrara}
\maketitle

\begin{abstract}
 In this paper the optimal control of alignment models composed by a large number of agents is investigated in presence of a selective action of a controller, acting in order to enhance consensus. Two types of selective controls have been presented: an homogeneous control filtered by a selective function and a distributed control active only on a selective set. As a first step toward a reduction of computational cost, we introduce a model predictive control (MPC) approximation by deriving a numerical scheme with a feedback selective constrained dynamics. Next, in order to cope with the numerical solution of a large number of interacting agents, we derive the mean-field limit of the feedback selective constrained dynamics, which eventually will be solved numerically by means of a stochastic algorithm, able to simulate efficiently the selective constrained dynamics. Finally, several numerical simulations are reported to show the efficiency of the proposed techniques.
\end{abstract}

\section{Introduction}
In this paper we focus on multi-agent systems subject to a velocity alignment dynamics \cite{CuckerSmale1, GP,HHK10,MoTa:13,olfati2007consensus,Vicsek12}, and influenced by an external control. The control term can be used to study the influence of an external agent on the system dynamics, or to enforce emergence of non spontaneous desired asymptotic states. This type of problems has risen large interest in several communities, see for example \cite{AHP13,BKF14,bofo13, BW14,CFPT12, MJ07,MNJD09}.

We will consider models of Cucker-Smale-type, \cite{CuckerSmale1, HHK10,MoTa:13} consisting of a second order system where an average process among binary interactions rules the alignment of the velocities, and where such interactions are weighted by a function of the relative distance among two agents.  Thus we consider a system of $N$ agents with position and velocity $(x_i,v_i)\in\mathbb{R}^{2d}$,  and initial datum $(x_i(0),v_i(0)) = (x_i^0,v_i^0)$,  which evolves according to 
as follows
\begin{equation}
\begin{aligned}\label{eq:CS1}
&\dot{x}_i=v_i,\\
&\dot{v}_i=\frac{1}{N}\sum_{i=1}^N H(x_i,x_j)(v_j-v_i), \qquad i=1,\ldots,N
\end{aligned}
\end{equation}
where $H(x,y) = H(|x-y|)$, is a general communication function weighting the alignment towards the mean velocity. The following theorem holds for systems of type \eqref{eq:CS1}, 
\begin{thm}{(Unconditional flocking).}\label{thm:CS}
Let us consider the system \eqref{eq:CS1}, where the communication function $H(x,y) = H(|x-y|)$ is assumed to be decreasing and such that
\[ 
(i)\quad H(r)\leq H(0)\leq 1,\qquad (ii)\quad \int_{0}^{\infty}H(r)\, dr = \infty,
\]
then the maximal diameter of the positions remains uniformly bounded by $d^\infty$, namely we have $\max_{ij} \| x_i(t)-x_j(t)\|\leq d^\infty<\infty$ for every $t\geq 0$, and the velocities converge exponentially fast towards the flocking state $v^\infty$, such that
\[
\|v_i(t)-v^\infty\|\leq \exp\left\{- H(d^\infty) t\right\}\max_{ij}\|v_i^0-v_j^0\|.
\]
\end{thm}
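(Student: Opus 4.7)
The plan is a Lyapunov/bootstrap argument on the pair of diameters
$X(t):=\max_{i,j}\|x_i(t)-x_j(t)\|$ and $V(t):=\max_{i,j}\|v_i(t)-v_j(t)\|$, exploiting hypothesis~(ii) to close a dissipation estimate. As a preliminary, swapping $i\leftrightarrow j$ in the double sum $\sum_{i,j} H(x_i,x_j)(v_j-v_i)$ shows that it is antisymmetric and hence vanishes, so the mean velocity $\bar v=\frac{1}{N}\sum_i v_i$ is conserved along \eqref{eq:CS1}. The only admissible flocking velocity is therefore $v^\infty:=\bar v(0)$; since $\|v_i(t)-v^\infty\|\leq V(t)$ for every $i$, the theorem reduces to estimating $V$ and $X$.

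Next, I would derive, in the sense of one-sided Dini derivatives (since $X,V$ are only Lipschitz), the coupled inequalities
$\dot X(t)\leq V(t)$ and $\dot V(t)\leq -H(X(t))\,V(t).$
The first is immediate from $\dot x_i-\dot x_j=v_i-v_j$ and Cauchy--Schwarz. For the second, at a time where the maximum is realised by a pair $(i_0,j_0)$, I would differentiate $\tfrac12\|v_{i_0}-v_{j_0}\|^2$, substitute the Cucker--Smale right-hand side, isolate the diagonal contribution $-\tfrac{1}{N}\sum_k H(x_{i_0},x_k)\|v_{i_0}-v_{j_0}\|^2$, and bound the cross terms using the definition of $V(t)$ together with $H\leq H(0)\leq 1$. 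Monotonicity of $H$ then supplies the uniform lower bound $H(x_{i_0},x_k)\geq H(X(t))$, yielding the claimed inequality.

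Combining the two Dini inequalities via the chain rule gives
$\frac{d}{dt}\Bigl(V(t)+\int_{X(0)}^{X(t)} H(s)\,ds\Bigr)\leq 0,$
so the functional $\mathcal L(t):=V(t)+\int_{X(0)}^{X(t)} H(s)\,ds$ is non-increasing. Because $V(0)=\mathcal L(0)\geq \int_{X(0)}^{X(t)} H(s)\,ds$ and $\int_0^{\infty}H=\infty$ by hypothesis~(ii), there exists a finite $d^\infty$ (defined implicitly by $\int_{X(0)}^{d^\infty}H(s)\,ds=V(0)$) such that $X(t)\leq d^\infty$ for all $t\geq 0$. Plugging this uniform bound back into the velocity inequality gives $\dot V\leq -H(d^\infty)V$; Gr\"onwall's lemma then yields $V(t)\leq V(0)\,e^{-H(d^\infty)t}\leq \max_{ij}\|v_i^0-v_j^0\|\,e^{-H(d^\infty)t}$, and the estimate $\|v_i-v^\infty\|\leq V(t)$ closes the argument.

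The main obstacle I anticipate is step two: making the Dini inequality for $\dot V$ rigorous. The pairwise maxima $X$ and $V$ switch extremal pairs at isolated times, so one must work with upper Dini derivatives and a careful selection of the extremal pair valid on a full-measure set. The algebraic bookkeeping that renders the cross terms harmless and permits replacing every $H(x_{i_0},x_k)$ by the single lower bound $H(X)$ is classical (going back to Cucker--Smale and refined by Ha--Liu, Ha--Tadmor) but is genuinely the delicate technical core of the proof.
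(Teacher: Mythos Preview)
The paper itself does not prove Theorem~\ref{thm:CS}; it simply defers to \cite{CuckerSmale1,HHK10,MoTa:13}. Your Lyapunov/bootstrap argument on the pair of diameters $(X,V)$, with the system of Dini inequalities $\dot X\le V$, $\dot V\le -H(X)V$, the monotone functional $\mathcal L(t)=V(t)+\int_{X(0)}^{X(t)}H(s)\,ds$, and the closing Gr\"onwall step, is precisely the Ha--Liu argument those references contain. So at the level of strategy your proposal coincides with what the paper invokes.

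One genuine caution about your sketch of step two. The route you describe---``isolate the diagonal contribution $-\tfrac1N\sum_k H(x_{i_0},x_k)\|v_{i_0}-v_{j_0}\|^2$ and bound the cross terms using $H\le H(0)\le 1$''---does not close as written: a crude bound of the cross terms by $V^2$ via $|H_{i_0k}-H_{j_0k}|\le 1$ produces a contribution of order $+V^2$ that dominates the diagonal term $-H(X)V^2$, since $H(X)\le 1$. The mechanism that actually works is geometric: maximality of the pair $(i_0,j_0)$ forces the sign conditions
\[
(v_k-v_{i_0})\cdot(v_{i_0}-v_{j_0})\le 0,\qquad (v_k-v_{j_0})\cdot(v_{i_0}-v_{j_0})\ge 0\qquad\text{for every }k,
\]
because all $v_k$ lie in the ball of radius $V$ around $v_{j_0}$ (resp.\ $v_{i_0}$) with $v_{i_0}$ (resp.\ $v_{j_0}$) on its boundary. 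With these signs in hand one replaces each weight $H_{i_0k}$, $H_{j_0k}$ by the common \emph{lower} bound $H(X)$, after which the two sums telescope exactly to $-H(X)\|v_{i_0}-v_{j_0}\|^2$. The upper bound $H\le 1$ plays no role in this step. You correctly flag this as the delicate core and cite the right sources, but the specific algebraic plan you wrote down would fail.
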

\noindent
We refer to \cite{CuckerSmale1, HHK10,MoTa:13} for proof of this result.  

The unconditional flocking can be retrieved also in the case conditions $(i)$-$(ii)$ do not hold, but particular properties on the initial state of the system should be satisfied, see \cite{CuckerSmale1}. In a general setting these conditions are not usually fulfilled, therefore a natural way to reach the flocking condition in system of type \eqref{eq:CS1}, requires the intervention on the system, by means of a designed control  or an external force.
This idea has been studied in several research fields, and from different perspectives, in particular for Cucker-Smale-type models,\cite{AHP13,bofo13,BKF14,FPR14,BW14,ABCK15}. Hence a natural framework for such problem consists in finding a control strategy ${\bf u}=(u_1,\ldots,u_N)\in\mathbb{R}^{d\times N}$ in the space of the admissible controls $\mathcal{U}$, solution of the following minimization problem, 
\begin{equation}
\begin{aligned}\label{eq:pbc0}
&\qquad\qquad\min_{{\bf u(\cdot)}\in\mathcal{U}} J_T({\bf u}(\cdot))=\frac{1}{2}\int_0^T\frac{1}{N}\sum_{j=1}^N\left(\|v_j-\bar v\|^2 +\Psi(u_j)\, \right)dt,
\end{aligned}
\end{equation}
where $\bar v$ is a desired velocity, $\Psi:\mathbb{R}^d\to\mathbb{R}_+$ a positive convex function, and subject to the dynamics
\begin{equation}
\begin{aligned}\label{eq:pbm0}
&\dot{x}_i=v_i,\\
&\dot{v}_i=\frac{1}{N}\sum_{j=1}^N H(|x_i-x_j|)(v_j-v_i) + u_i, \qquad i=1,\ldots,N.
\end{aligned}
\end{equation}

The type of control problem underlined by \eqref{eq:pbc0}-\eqref{eq:pbm0} is equivalent to assume the presence of a {\em policy maker} able to exert a control action on every single agent. 
From a modeling view point this choice can be justified in different ways, for example by considering a limited amount of resources, thus a governor cannot act simultaneously on every agent but it has to select a portion of the population in order to make its action relevant; alternatively a control active only on few agents, can be promoted taking into account a $\ell_1$-minimization in \eqref{eq:pbc0} , \cite{CFPT12,ABCK15}. 
On the other hand, if we do not consider enough restriction on the action of our policy maker, we can not exclude that the solution of the optimal control problem \eqref{eq:pbc0}-\eqref{eq:pbm0} requires the ability of controlling point-wisely every agent in a large set. 

In this paper will address the goal of controlling a large agents' ensemble when the action of the controllers is performed in selective way. We will give two different interpretation to the concept of selective control.

\begin{itemize}
\item[$(a)$] First we will consider {\em selectivity} as an intrinsic property of the system of filtering external actions, thus the control influences the flocking dynamics through a {\em selective function} $S(x,v,t)$, which depends on the state of the system. In particular we will assume the control to be equal for all agents, i.e. $u=u_1=\ldots = u_N$, and introduce a function $S(\cdot)$ modeling the propensity to accept the information coming from $u$.

\item[$(b)$] Second, we will define {\em selectivity } according to the membership to a particular set, and we will consider the control to be active only on the agents belonging to a given set $\Omega\subset\mathbb{R}^d\times\mathbb{R}^d\times[0,T]$. We will assume that such set can be defined according to some properties of the system and of the domain.
\end{itemize}

From the mathematical viewpoint to each model we will associate different control problems, in the first case we will assume a setting in terms of optimal control problems, \cite{AHP13,BW14,CFPT12}, in the second case we will introduce a type of differential game, making the implicit assumption that agents inside the selective set wish to optimize their own functional. \cite{DLR,DHL15}, for connection among this two different approach we refer to \cite{BFY:13}. 

Moreover, since we are interested also in the numerical investigation of the models, we will introduce a numerical strategy to reduce the computational cost. Indeed the numerical solution of  control problems for systems of type \eqref{eq:pbc0}-\eqref{eq:pbm0}, requires usually a tremendous computational efforts, due to the nonlinearities of the model and the large number of agents, \cite{Sontag1998aa}. 
Thus a first step towards the cost reduction is obtained numerically via model-predictive control (MPC), \cite{MayneMichalska1990aa}, when dealing with such complex system,  where instead of solving the above control problem over the whole time horizon, the system is approximated by an iterative solution over a sequence of finite time steps, \cite{AHP13,APZ14}.

A further approximation of the microscopic dynamics consists in the derivation of the so-called mean-field limit of the particle system, which describes the behavior of the system for large number of agents, \cite{MR2744704,CCH13,CCR11}. Such statistical description of the evolution of the microscopic system, has been recently coupled with the optimal control problem, in order to furnish a novel description of problems in terms of mean-field optimal control problems \eqref{eq:pbc0}-\eqref{eq:pbm0}, \cite{BFY:13,FS}.

In this direction our paper presents a simple selective approach to obtain greedy solutions to the mean-field optimal control problem, embedding into the mean-field dynamics the instantaneous controls obtained from the MPC procedure.

The paper is organized as follows, in Section \ref{sec:one} we described our modeling setting, deriving a approximated solution  through an model-predictive control strategy (MPC); in Section \ref{sec:two} we will derive formally a mean-field description of the constrained problem. Finally in Section \ref{sec:three} we present several numerical results showing the efficiency of the proposed technique.

\section{Selective control of flocking models}\label{sec:one}
We propose two different models and control setting to promote the alignment of a flocking system: a space homogeneous control filtered by a {\em selective function} measuring the influence on the dynamics, an heterogeneous control which activates on some specific agents once they belong to the {\em selective set}, $\mathcal{S}$.

\paragraph{Filtred control with selective function.}
We consider a Cucker-Smale-type model where a system of $N$ agents described by their position and velocity, $(x_i,v_i)\in\mathbb{R}^{2d}$ with initial datum $(x_i(0),v_i(0)) = (x_i^0,v_i^0)$, evolves accordingly to 
\begin{equation}
\begin{aligned}\label{eq:pbm1}
&\dot{x}_i=v_i,\\
&\dot{v}_i=\frac{1}{N}\sum_{j=1}^N H(|x_i-x_j|)(v_j-v_i)+u\Qs(x_i,v_i,t),\qquad i=1,\ldots,N
\end{aligned}
\end{equation}
where $H:\mathbb{R}_+\to[0,1]$, is a general communication function weighting the alignment towards the mean velocity, depending on the relative distance of the agents. 

The control term $u\in\mathbb{R}^d$ is include as an external intervation, whose action is multiplied by function $S:\mathbb{R}^{2d}\times[0,T]\to[0,1]$ a real-valued function, which tunes the influence of the control on the single agent according to its position and velocity.
We refer to $\Qs(x,v,t)$ as {\em selective function} and the term $u\Qs(x,v,t)$ as {\em filtered control}. Thus we define  $u\in\mathbb{R}^d$ as the solution of the following optimal control problem
\begin{equation}
\begin{aligned}\label{eq:pbc1}
&u^*(\cdot)=\arg\min_{u:[0,T]\to\Omega} J_T(u(\cdot))=\frac{1}{2}\int_0^T\left(\frac{1}{N}\sum_{j=1}^N\|v_j-\bar v\|^2 + \nu\|u\|^2\right)\, dt,
\end{aligned}
\end{equation}
constrained to the dynamics of  \eqref{eq:pbm1}, where $\bar v\in\mathbb{R}^d$ is a desired velocity, and $\nu>0$ a regularization parameter the space of admissible controls. For simplicity in the formulation \eqref{eq:pbc1} we consider a least--square type cost functional, but other choices can be considered.

\begin{rmk}
Observe that a-priori $S(x,v,t)$ could be un-known by the controller, which would implies a different modeling setting accounting possible uncertainties. Instead we will assume that the controller has a perfect knowledge of the selective function $S(x,v,t)$, which might be eventually his belief on the effectiveness of the control action.
\end{rmk}

\paragraph{Pointwise control with selective set.}

In a different setting we consider the Cucker-Smale-type model  \eqref{eq:pbm0} where each agent is controlled directly, thus we have a system of $N$ agents with initial datum $(x_i(0),v_i(0)) = (x_i^0,v_i^0)\in\mathbb{R}^{2d}$ evolving according to
\begin{equation}
\begin{aligned}\label{eq:pbm2}
&\dot{x}_i=v_i,\\
&\dot{v}_i=\frac{1}{N}\sum_{j=1}^N H(|x_i-x_j|)(v_j-v_i)+u_i,\qquad i=1,\ldots,N,
\end{aligned}
\end{equation}
where ${u_i(t)}\in\mathbb{R}^d$ is the strategy of every agents. We  assume that the strategy of the agent $i$ is active only if $i\in\mathcal{S}(t)$ for every $i=1,\ldots,N$, and where $\mathcal{S}$ is defined for every $t\in[0,T]$as follows
\begin{align}\label{eq:SelSet}
\mathcal{S}(t) = \left\{i\in\{1,\ldots,N\}\, | (x_i(t),v_i(t))\in\Omega(t)\subset \mathbb{R}^d\times \mathbb{R}^d \right\},
\end{align} 
where $\Omega(t)$ is the selective set defined on the phase space at time $t$. 
Therefore only the dynamics of a portion of the total agents can be influenced, in order to steer whole system towards the target velocity $\bar v$.

Differently from the previous model we set up the problem in a differential game setting. 
Thus the strategy of each agent is defined through the following set of minimization problems, 
\begin{equation}\label{eq:pbc2b}
{u^*_i(t)}=
\begin{cases}
\displaystyle\arg\min_{{ u_i}(\cdot):[t,T]\to\Omega} J^i_T({u_i(\cdot)})=\dfrac{1}{2}\displaystyle\int_t^{T}\left(\frac{1}{N}\sum_{j=1}^N\|v_j-\bar v\|^2 + {\nu}\|u_i\|^2\right)\, dt&\quad\textrm{ if } i \in\mathcal{S}(t),\\
0 &\quad\textrm{ otherwise,}
\end{cases}
\end{equation}
for $i=1,\ldots,N$, where each agent wants to minimizes  its own functional $J^i_T$, as soon as at time $t$ he belongs to $\mathcal{S}(t)$. Therefore each agent strategy at time $t$ is the solution of an equilibrium problem solved among the agents in the set $\mathcal{S}$. 

\begin{rmk}
We observe that this problem can be formulated also in terms of an optimal control problem, where the aim is to find a vector ${\bf u} = (u_1,\ldots,u_N)\in\mathbb{R}^{d\times N}$ solution to the following minimization problem,
\begin{equation}\label{eq:pbc2a}
{\bf u^*(\cdot)}=\arg\min_{{\bf u}:[0,T]\to\Omega^N} J_T({\bf u}(\cdot))=\dfrac{1}{2}\displaystyle\int_0^T\left(\frac{1}{N}\sum_{j=1}^N\|v_j-\bar v\|^2 + \frac{\nu}{N}\sum_{\ell\in\mathcal{S}(t)}\|u_\ell\|^2\right)\, dt,
\end{equation}
constrained to \eqref{eq:pbm2}. Note that at variance with problem \eqref{eq:pbc1},  functional \label{eq:pbc2a} carries an additional  difficulty given by implicit  dependency of $\mathcal{S}(t)$ with respect to the agent dynamics.


\end{rmk}

\subsection{Model predictive control}
We introduce now a numerical technique based on model predictive control (MPC), also called receding horizon strategy in the engineering literature, in order to reduce the computational cost of the optimal control problem, \cite{CaBo:04,MayneMichalska1990aa}.
This procedure is in general only suboptimal with respect to the global optimal solution of problems \eqref{eq:pbm1}--\eqref{eq:pbc1}, and  \eqref{eq:pbm2}--\eqref{eq:pbc2a}, nonetheless we will show that also in the simplest setting the solution of the MPC furnishes an instantaneous feedback control, which is a consistent discretization of a first order approximation of the optimal control dynamics.

Let us consider the time sequence $0=t_0<t_1<\ldots<t_M=T$, a discretization of the time interval $[0,T]$, where  $\Delta t = t_{n}-t_{n-1}$, for all $n=1,\ldots, M $ and $t_M = M\Delta t $. Then we assume the control to be constant on every interval $[t_n,t_{n+1}]$, and defined as a piecewise function, as follows
\begin{align}\label{eq:mpcctrl}
u(t) = \sum_{n=0}^{M-1} u^n\chi_{[t_n,t_{n+1}]}(t),
\end{align} 
where $\chi(\cdot)$ is the characteristic function of the interval $[t_n,t_{n+1}]$.
In general  {\em model predictive control}  strategies solve a finite horizon open-loop optimal control problem predicting the dynamic behavior over a predict horizon $t_m\leq t_M$, with initial state sampled at time $t$ (initially $t = t_0$), and computing the control on a control horizon $t_c\leq t_m$. 

Since our goal is to derive instantaneous control strategies, in what follows we will consider a reduced setting $t_m=t_c=t+\Delta t$, and taking in to account a first order discretization of the optimal control problem \eqref{eq:pbm1}-\eqref{eq:pbc1}.

\subsubsection{Instantaneous filtered control}
Let us introduce a full discretization of the system  \eqref{eq:pbm1} through a forward Euler scheme, and we solve the minimization problem  \eqref{eq:pbc1} via MPC strategy on every time frame $[t_n, t_n+\Delta t]$. 
Thus the reduced optimal control problem reads
\begin{equation}\label{eq:disc_functional}
\min_{u^n}J_{\Delta t}= \frac{1}{2N}\sum_{j=1}^N \|v_j^{n+1} -{\bar v}\|^2 +\frac{\nu}{2} \|u^n\|^2,
\end{equation}
subject to 
\begin{equation}
\begin{aligned}\label{eq:Fwmicr}
x_i^{n+1} &= x_i^n+\Delta t v_i^n,\\
v_i^{n+1} &= v_{i}^n +  \frac{\Delta t}{N}\sum_{j=1}^N H(|x^n_i-x^n_j|)(v_j^n-v_i^n)+ \Delta t u^n\Qs(x_i^n,v_i^n,t^n),
\end{aligned}
\end{equation}
for all $i=1,\ldots, N$, and $\u^n\in\Omega\subset \mathbb{R}^d$. The MPC aims at determining the value of the control ${u}^n$ by solving for the known state $(x^n_i,v^n_i)$ a (reduced) optimization problem on $[t_n,t_{n+1}]$ in order to obtain the new state  $(x_i^{n+1},v_i^{n+1})$. This procedure is reiterated until $n \Delta t = T$ is reached.   
 In this way it is possible to reduce the complexity of the initial problem  \eqref{eq:pbm1}-\eqref{eq:pbc1},
 to an optimization problem in a single variable ${u}^n$. 
Therefore we introduce the compact notation 
$H^n_{ij} = H(|x_i^n-x_j^n|)$, and $\Qs^n_i = \Qs(x_i^n,v_i^n,t^n)$, where for every $i$, $\p_i$ is the associated lagrangian multiplier of $\v_i$, and we define the discrete Lagrangian  $\mathcal{L}_{\Delta t}=\mathcal{L}_{\Delta t}(\v,{\u}^n,\p)$, such that
\vskip -0.556cm
\begin{equation}\label{eq:Lagr}
\begin{aligned}
&\mathcal{L}_{\Delta t}=&J_{\Delta t} (\v,\u^n)+\frac{1}{N}\sum_{j=1}^N \p_j\cdot\left(\v_j - {v}_j^n -\frac{\Delta t}{N}\sum_{\ell=1}^{N}H^n_{j \ell}({v}^n_\ell-{v}^n_j)-\Delta t {\u^n}\Qs^n_j\right). \\
\end{aligned}
\end{equation}
%
%
Computing the gradient of \eqref{eq:Lagr} with respect to each component of $\v_i$ and $\u^n$ for every $i=1,\ldots,N$, we obtain the following first order optimality conditions
\begin{subequations}
\begin{align}
& \v_i-\bar v + \p_i = 0,\label{eq:optctrl}\\
& \nu\u^n +\frac{\Delta t}{N} \sum_{j=1}^N\p_j\Qs^n_j= 0.\label{eq:optadj}
\end{align}
\end{subequations}
This approach allows to express explicitly the control as feedback term of the state variable, indeed plugging expression \eqref{eq:optctrl} into \eqref{eq:optadj}, we have that for every $n= 0,\ldots,M-1$
\begin{equation}\label{eq:Dfwdcontr1}
\begin{aligned}
& \u^n = -\frac{\Delta t}{\nu N} \sum_{j=1}^N(\v_j-\bar v)\Qs^n_j.
\end{aligned}
\end{equation}
Substituting in the discretized system \eqref{eq:Fwmicr} the expression obtained in \eqref{eq:Dfwdcontr1}, the feedback controlled system results
\begin{equation}\label{eq:Dfwdcontr2}
\begin{aligned}
& v^{n+1}_i=v^n_i+\frac{\Delta t}{N}\sum_{j=1}^{N}H^n_{ij}(v^n_j-v^n_i)+\frac{(\Delta t)^2}{\nu N} \sum_{j=1}^N(\bar v-v_j^{n+1})\Qs^n_j\Qs^n_i,\qquad i=1,\ldots,N,
\end{aligned}
\end{equation}
where the action of the control  is substituted by an implicit term representing the relaxation towards the desired velocity $\bar v$.
Note that in this implicit formulation the action of the control is lost for $\Delta t\to 0$, since it is expressed in terms of $O(\Delta t ^2)$. Thus, in order to rewrite the system as a consistent time discretization of the original control problem  is necessary to assume the following scaling on the regularization parameter,  $\nu=\Delta t \kappa$, and we revert to the system into an explicit form,
thus we obtain
\begin{equation}
\begin{aligned}\label{fullDsystem1}
&v^{n+1}_i= v^n_i+\frac{\Delta t}{N}\sum_{j=1}^NH^n_{ij}(v^n_j-v^n_i)+\frac{\Delta t}{N\kappa+\Delta t\sum_{j=1}^N(\Qs^n_i)^2}\sum_{j=1}^N\left( \bar v-v^n_j\right)\Qs^n_i\Qs^n_j+O(\Delta t^2).
\end{aligned}
\end{equation}
where we have omitted $O(\Delta t^2)$ terms. We leave the details of the derivation of the forward system in Appendix \ref{app:A}.

Hence system \eqref{fullDsystem1} represents a consistent discretization of the following  dynamical system, 
\begin{equation}
\begin{aligned}\label{eq:MFmodel1}
&\dot x_i= v_i,\\
&\dot v_i= \frac{1}{N}\sum_{j=1}^NH(|x_i-x_j|)(v_j-v_i)+\frac{1}{\kappa N}\sum_{j=1}^N\left( \bar v-v_j\right)\Qs(x_j,v_j,t)\Qs(x_i,v_i,t).
\end{aligned}
\end{equation}
where the control term is expressed by a steering term acting as an average weighted by the {\em selective fuction} $\Qs(\cdot,\cdot)$.
\begin{rmk}
Let us assume $\Qs(\cdot,\cdot,\cdot)\equiv 1$, and defining $m(t)$ the mean velocity of the system, then we have
\begin{equation}
\begin{aligned}
\dot m&= \frac{1}{\kappa}(v_d-m),\qquad m(0) = m_0.
\end{aligned}
\end{equation}
which admits the explicit solution, $m(t) = (1-e^{-t/k})\bar v + e^{-t/k}m_0$. Therefore, for $t\to\infty$, $m(t)=\bar v$. Thus, in this case, the feedback control is able to control only the mean of the system but not to assure the global flocking state, note.
\end{rmk}

\subsubsection{Instantaneous pointwise control}
Similarly to previous section we introduce a full discretization of the system through a forward Euler scheme of the optimal control problem  \eqref{eq:pbm2}-\eqref{eq:pbc2b} on every time frame $[t_n, t_n+\Delta t]$. Then the reduced optimal control problem reads
\begin{equation}\label{eq:pbc2d}
{u^*_i(t^n)}=
\begin{cases}
\displaystyle\arg\min_{{ u^n_i}} \dfrac{1}{2}\left(\frac{1}{N}\sum_{j=1}^N\|v^{n+1}_j(u^n)-\bar v\|^2 + {\nu}\|u^n_i\|^2\right)&\quad\textrm{ if } i \in\mathcal{S}(t^n),\\
0 &\quad\textrm{ otherwise,}
\end{cases}
\end{equation}
where the solution is easily retrieved by differentiation with respect to $u_i^n$, for every $i=1,\ldots,N$. Thus we have
\begin{equation}\label{eq:brs}
{u^*_i(t^n)}=
\begin{cases}
\displaystyle  -\frac{\Delta t}{\Delta t^2 +\nu}(v^{n}_i-\bar v) + \frac{\Delta t^2}{\Delta t^2 +\nu}\sum_{j=1}^NH_{ij}^n(v^n_j-v_i^n)&\quad\textrm{ if } i \in\mathcal{S}(t^n),\\
0 &\quad\textrm{ otherwise.}
\end{cases}
\end{equation}

In order to rewrite the system as a consistent time discretization of the original control problem we scale the regularization parameter,  $\nu=\Delta t \kappa$, and plugging the control \eqref{eq:brs} into the discretized dynamics, we obtain
\begin{equation}
\begin{aligned}\label{fullDsystem2b}
&v^{n+1}_i= v^n_i+\frac{\Delta t}{N}\sum_{j=1}^NH^n_{ij}(v^n_j-v^n_i)+\frac{\Delta t}{\kappa+\Delta t}(\bar v - v^{n}_i)\chi_{\Omega^n}(x^n_i,v^n_i) + O(\Delta t^2).
\end{aligned}
\end{equation} 
where we have omitted $O(\Delta t^2)$ terms and $\chi_{\Omega^n}$ is the characteristic function defined on the selective set $\Omega(t^n)$. 
Hence system \eqref{fullDsystem2b} is a consistent discretization of 
\begin{equation}
\begin{aligned}\label{eq:MFmodel2}
&\dot x_i= v_i,\\
&\dot v_i= \frac{1}{N}\sum_{j=1}^NH(|x_i-x_j|)(v_j-v_i)+\frac{1}{\kappa}(\bar v -v_i)\chi_{\Omega(t)}(x_i,v_i).
\end{aligned}
\end{equation}
 Note that at variance with respect to the previous case, the control is acting pointwisely on every single agent as a steering term towards the desired state. In the case of $H\equiv 1$, and $\Omega(t)\equiv\mathbb{R}^d\times\mathbb{R}^d$, for any $t\in[0,T]$ it can be easily shown that the velocities converge to the desired flocking state $\bar v$, for $t\to\infty$ and for any $\kappa>0$.
 \begin{rmk}
Let us remark that performing the MPC strategy on single time interval $[t^n,t^{n+1}]$ for the  optimal control problem \eqref{eq:pbc2a}, gives us the following discrete functional,
\begin{equation}\label{eq:disc_func2}
\min_{{\bf u}^n}J_{\Delta t}= \frac{1}{2N}\sum_{j=1}^N \|v_j^{n+1} -{\bar v}\|^2 +\frac{\nu}{2N} \sum_{\ell\in\mathcal{S}^n}^N\|u_\ell^n\|^2.
\end{equation}
Writing the discrete Lagrangian and computing its variations with respect to each components of ${\bf u}^n$ and ${v^{n+1}}$, gives us to the following system
\begin{align}\label{eq:Ffwd2d}
& v^{n+1}_i=
v^n_i+\Delta t\sum_{j=1}^{N}H^n_{ij}(v^n_j-v^n_i)-\frac{\Delta  t^2}{\nu N}(v_i^{n+1}-\bar v)\chi_{\Omega^n}(x^n,v^n).
\end{align}
Thus, by reverting to the explicit version of \eqref{eq:Ffwd2d} we obtain the same feedback control system \eqref{fullDsystem2b} with instantaneous control \eqref{eq:brs}. Therefore we have that the suboptimal controls recovered via model predictive control on the single horizon, respectively for \eqref{eq:pbc2a} and  \eqref{eq:pbc2b} are equivalent, \cite{DHL15, AHP13}.
\end{rmk}

\section{Mean-field limit for the controlled flocking dynamics}\label{sec:two}
We want to give a mean-field description of models \eqref{eq:MFmodel1} and \eqref{eq:MFmodel2}, thus let us write the constrained flocking system in the following general form
\begin{equation}
\begin{aligned}\label{eq:MFmodel}
&\dot x_i= v_i,\\
&\dot v_i= \mathcal{H}[f^N](x_i,v_i) + \mathcal{K}[f^N](x_i,v_i),
\end{aligned}
\end{equation}
where we introduced  the empirical probability measures
\begin{equation}\label{eq:emp}
f^N(x,v,t) = \frac{1}{N}\sum_{i=1}^N\delta(x-x_i(t))\delta(v-v_i(t)),
\end{equation}
representing the particle density at time $t$ with position and velocity $(x,v)\in\mathbb{R}^d\times\mathbb{R}^d$.
Moreover we defined the general operator $\mathcal{H}[\cdot]$ as follows
\begin{equation}
\begin{aligned}
\mathcal{H}[f](x,v) :=\psi(x,v)*f=\int_{\mathbb{R}^{2d}}H(|x-y|)(w-v)f(y,w,t)~dydw, 
\end{aligned}
\end{equation}
 with $\psi(x,v) := vH(|x|)$, and  where the operator $\mathcal{K}[\cdot]$ indicates in general the control term. Thus for different the types of instantaneous controls we derived in the previous sections, we have respectively
\begin{align}
&\mathcal{K}_\xi[f](x,v) := S(x,v,t)\xi(x,v,t) = \displaystyle \frac{1}{\kappa} S(x,v,t)\int_{\mathbb{R}^{2d}} (\bar v - w)S(y,w,t)f(y,w,t)\ dy dw,\label{xi}
\\
&\mathcal{K}_\zeta[f](x,v) := \zeta(x,v,t) = \frac{1}{\kappa} (\bar v - v)\chi_{\Omega(t)}(x,v)\label{zeta}.
\end{align}

Let us first derive formally the {\em mean-field limit } of system \eqref{eq:MFmodel}. We consider the empirical measures $f^N(t)$ defined in \eqref{eq:emp}, and a test function $\phi\in\mathcal{C}_{0}^1(\mathbb{R}^{2d})$, thus we compute 
\begin{align*}
\frac{d}{dt}\left\langle f^N(t),\phi\right\rangle=&\frac{1}{N}\sum_{i=1}^{N}\frac{d}{dt}\phi(x_i(t),v_i(t))=\frac{1}{N}\sum_{i=1}^{N}\nabla_x\varphi(x_i(t),v_i(t))\cdot v_i(t))\\
&\quad +\frac{1}{N}\sum_{i=1}^{N}\nabla_v\varphi(x_i(t),v_i(t))\cdot \left[\mathcal{H}[f^N](x_i(t),v_i(t)) + \mathcal{K}[f^N](x_i(t),v_i(t))\right],
\end{align*}
where $\langle\cdot,\cdot\rangle$ denotes the the integral in $x,v$ over the full $\mathbb{R}^{2d}$.
Collecting all the terms and integrating by parts in $(x,v)$ we recover the following weak formulation
\begin{align*}
\frac{d}{dt}\left\langle f^N,\phi\right\rangle=& -\left\langle v\cdot \nabla_x f^N, \phi\right\rangle-\left\langle \nabla_v\cdot  (\mathcal{H}[f^N]f^N), \phi\right\rangle-\left\langle \nabla_v\cdot  (\mathcal{K}[f^N]f^N),\phi \right\rangle.
\end{align*}
Rewriting the main expression we have
\begin{align*}
\left\langle \frac{\partial}{\partial t}f^N+ v\cdot \nabla_x f^N+\nabla_v\cdot  \mathcal{H}[f^N]f^N+\nabla_v\cdot  \mathcal{K}[f^N]f^N,\phi\right\rangle=0,
\end{align*}
and thus the strong form reads
\begin{align*}
 \frac{\partial}{\partial t}f^N+ v\cdot \nabla_x f^N+\nabla_v \cdot \mathcal{H}[f^N]f^N+\nabla_v\cdot  \mathcal{K}[f^N]f^N=0.
\end{align*}
%
%
%
%
Hence assuming that for $N\to \infty$ the limit $f^N\to f$ exists, where $f=f(x,v,t)$ is a probability density on $\mathbb{R}^{2d}$,
we obtain the following integro-differential PDE equation of the Vlasov-type,
\begin{equation}\label{eq:MFkinetic1}
\begin{aligned}
\partial_t f + v\cdot\nabla_x f&=-\nabla_v\cdot\left(\mathcal{H}[f]f\right)-\nabla_v\cdot\left(\mathcal{K}[f]f\right),
\end{aligned}
\end{equation}
as the mean-field limit of system \eqref{eq:MFmodel}.

In what follows we show some classical results of the rigorous derivation of the mean-field limit, restricting ourself to the control expressed by $\mathcal{K}_\xi[\cdot]$ in equation \eqref{xi} with selective function $S(x,v,t)$. Eventually we discuss the case of a general $\mathcal{K}[\cdot]$.
 
\paragraph{Stability result in presence of selective function.}
Let us consider $\mathcal{K}_\xi[f](x,v)$ defined as in \eqref{xi}, for this case we give sufficient conditions in order to prove the mean-field limit \eqref{eq:MFkinetic1}, (i.e. see hypothesis of Theorem 4.11 in \cite{CCR11}). To this end let us first introduce the following definition
\begin{definition}{(Wasserstein 1-distance).}\label{def:W}
Let $f,g\in\mathcal{P}_1(\mathbb{R}^d\times\mathbb{R}^d)$, be two Borel probability measures. Then the {\em Wasserstein} distance of order $1$ between $f$ and $g$ is defined as 
\begin{equation}
d_1(f,g) := \inf_{\pi\in\Pi}\left\{\int_{\mathbb{R}^{2d}\times\mathbb{R}^{2d}}|p_1-p_2|\ d\pi(p_1,p_2)\right\}
\end{equation}
\end{definition}
where the infimum is computed over the set of transference plans $\Pi$ between $f$ and $g$, i.e. among the probability measures $\pi$ in the product space $\mathbb{R}^{2d}\times\mathbb{R}^{2d}$ with marginals $f$ and $g$.

We further define $\mathcal{P}_{c}(\mathbb{R}^d\times\mathbb{R}^d)$ the subset of probability measures of compact support on $\mathbb{R}^d\times\mathbb{R}^d$, with finite first moment, and we define the non-complete metric space $\mathcal{A}:= \mathcal{C}([0,T],\mathcal{P}_c(\mathbb{R}^d\times\mathbb{R}^d)$ endowed with the Wasserstein 1-distance.
Moreover we introduce the set of functions $\mathcal{B}:= \mathcal{C}([0,T],\mathrm{Lip}_{loc}(\mathbb{R}^d\times\mathbb{R}^d,R^d)$, which are locally Lipschitz with respect to $(x,v)$, uniformly in time. 
Therefore let us consider the operator $\mathcal{F}[\cdot]:\mathcal{A}\to\mathcal{B}$, such that 
\[
\mathcal{F}[f](x,v):= \mathcal{H}[f](x,v) +  \mathcal{K}_\xi[f](x,v),  
\]
then we state the following
\begin{lemma}\label{lemm:stability}
Let $\psi(x,v)$, and $S(x,v,t)$ be locally Lipschitz and bounded, and $f,g\in\mathcal{A}$, such that $\textrm{\em supp}(f_t)\cup\textrm{\em supp}(g_t)\subseteq B_{r_0}$, for every $t\in[0,T]$ and for a given radius $r_0>0$.
Then for any ball $B_{r}\subset\mathbb{R}^d\times\mathbb{R}^d$, there exists a constant $C:=C(r_0,r)$ such that
\begin{subequations}
\begin{align}
&\max_{t\in[0,T]} \textrm{\em Lip}_r(\mathcal{F}[f])\leq C,\label{eq:hypo1}\\
& \|\mathcal{F}[f]-\mathcal{F}[g]\|_{L^{\infty}(B_r)}\leq C d_1(f,g),\label{eq:hypo2}
\end{align}
\end{subequations}
where $\textrm{\em Lip}_r(\mathcal{F}[f])$ denotes the  Lipschitz constant in the ball $B_r\subset\mathbb{R}^d\times\mathbb{R}^d$.
\end{lemma}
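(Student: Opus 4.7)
The plan is to treat separately the contributions $\mathcal{H}[f]$ and $\mathcal{K}_\xi[f]$ to $\mathcal{F}[f]$, exploiting in both estimates the same structural ingredients: the local Lipschitzianity and boundedness of $\psi$ and $S$, together with the uniform support condition $\mathrm{supp}(f_t)\cup\mathrm{supp}(g_t)\subseteq B_{r_0}$. The Lipschitz bound \eqref{eq:hypo1} comes from direct estimates under the integral sign, while the stability bound \eqref{eq:hypo2} follows from the Kantorovich-Rubinstein dual characterization of $d_1$, applied with the integrands viewed as test functions of the integration variables $(y,w)\in B_{r_0}$.

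For \eqref{eq:hypo1} I would fix $t\in[0,T]$ and $(x_1,v_1),(x_2,v_2)\in B_r$. Since $\mathrm{supp}(f_t)\subseteq B_{r_0}$, the arguments $(x_i-y,v_i-w)$ of $\psi$ all lie in $B_{r+r_0}$, hence the local Lipschitzianity of $\psi$ and the fact that $f_t$ is a probability measure yield
\[
|\mathcal{H}[f](x_1,v_1)-\mathcal{H}[f](x_2,v_2)|\leq \mathrm{Lip}_{B_{r+r_0}}(\psi)\bigl(|x_1-x_2|+|v_1-v_2|\bigr).
\]
For $\mathcal{K}_\xi[f]$ I would factor the multiplier $S(x,v,t)$ outside the integral: the remaining integral is uniformly bounded by $\kappa^{-1}(|\bar v|+r_0)\|S\|_\infty$ thanks to the support assumption on $f_t$, and so $\mathcal{K}_\xi[f]$ inherits its Lipschitz dependence on $(x,v)$ from $S$ alone, with constant depending only on $r,r_0,\kappa,\|S\|_\infty$ and $\mathrm{Lip}_{B_r\times[0,T]}(S)$. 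Summing the two contributions gives \eqref{eq:hypo1}.

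For \eqref{eq:hypo2} I would fix $(x,v)\in B_r$ and rewrite the pointwise difference $\mathcal{H}[f](x,v)-\mathcal{H}[g](x,v)$ as the pairing of $f_t-g_t$ against the test function $\Phi_1(y,w):=(w-v)H(|x-y|)$, and similarly $\mathcal{K}_\xi[f](x,v)-\mathcal{K}_\xi[g](x,v)$ as $\kappa^{-1}S(x,v,t)$ times the pairing against $\Phi_2(y,w):=(\bar v-w)S(y,w,t)$. The local Lipschitz assumptions on $H$ and $S$ give uniform control of $\mathrm{Lip}_{B_{r_0}}(\Phi_i)$ in $(x,v)\in B_r$ and $t\in[0,T]$; after extending each $\Phi_i$ to a globally Lipschitz function on $\mathbb{R}^{2d}$ with the same Lipschitz constant (McShane's extension), the Kantorovich-Rubinstein duality yields
\[
\Bigl|\int_{\mathbb{R}^{2d}}\Phi_i(y,w)\bigl[f(y,w,t)-g(y,w,t)\bigr]dy\,dw\Bigr|\leq \mathrm{Lip}(\Phi_i)\,d_1(f,g),
\]
and summing both pointwise bounds, uniformly in $(x,v)\in B_r$, produces \eqref{eq:hypo2}.

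The principal work is therefore bookkeeping of constants: checking that every compact set on which local Lipschitz constants of $\psi$ and $S$ are evaluated depends only on the radii $r$ and $r_0$, and that the hypothesis of uniform-in-$t$ local Lipschitzianity of $S$ propagates uniformity over $[0,T]$ to all the emerging constants. I do not anticipate a deeper obstacle, and the same scheme readily extends to a general control operator $\mathcal{K}[\cdot]$ satisfying analogous local Lipschitz and boundedness properties.
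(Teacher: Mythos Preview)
Your proposal is correct and follows the same overall decomposition as the paper: treat $\mathcal{H}[f]$ and $\mathcal{K}_\xi[f]$ separately, obtain \eqref{eq:hypo1} by direct estimation under the integral sign using the local Lipschitz property of $\psi$ and $S$, and obtain \eqref{eq:hypo2} via the Wasserstein distance. The one genuine difference is in how you access $d_1$ for \eqref{eq:hypo2}. The paper works on the \emph{primal} side: it fixes an optimal transference plan $\pi$ between $f$ and $g$, writes
\[
\mathcal{F}[f](x,v)-\mathcal{F}[g](x,v)=\int_{\mathbb{R}^{4d}}\Bigl[\psi(x-y,v-w)-\psi(x-z,v-u)+S(x,v)\bigl(\xi(y,w)-\xi(z,u)\bigr)\Bigr]\,d\pi,
\]
with $\xi(y,w)=(\bar v-w)S(y,w)/\kappa$, and then bounds the integrand by local Lipschitz constants times $|(y,w)-(z,u)|$. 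You instead work on the \emph{dual} side, invoking Kantorovich--Rubinstein with the integrands viewed as Lipschitz test functions of $(y,w)$. Both routes are standard and yield the same constants; the coupling version has the mild advantage that, since $\mathrm{supp}\,\pi\subseteq B_{r_0}\times B_{r_0}$, only local Lipschitz constants on $B_{r_0}$ enter and no extension step (McShane or otherwise) is needed, whereas your dual argument requires either that extension or the observation that KR duality for measures supported in $B_{r_0}$ can be taken over functions Lipschitz on $B_{r_0}$ alone.
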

\begin{proof}
Let us first define estimate
\begin{align*}
|\mathcal{F}[f](x_1,v_1)&-\mathcal{F}[f](x_2,v_2)|\leq \int_{\Rdd} \left|\psi(x_1-y,v_1-w)-\psi(x_2-y,v_2-w) \right| f(y,w)\ dydw\\
&+ \frac{1}{\kappa}\left|S(x_1,v_1)-S(x_2,v_2)\right|\int_{\Rdd}\left|\bar v - w\right||S(y,w)| f(y,w)\ dydw\\
&\leq \left(\|\psi\|_{\textrm{Lip}} +  M_r\|S\|_{\textrm{Lip}}  \right)|(x_1,v_1)-(x_2,v_2)|
\end{align*}
where for the sake of brevity we omit the dependency on $t$, and where $M_r:=M(r,r_0,|S|,\kappa)$, where $r>0$ is such that  $\textrm{\em supp}(f_t)\cup\textrm{\em supp}(g_t)\subseteq B_{r}$, then we have  \eqref{eq:hypo1}.
Let us now introduce the optimal transference plan $\pi$ between $f$ and $g$, in the sense of Definition \ref{def:W}, and having defined  $\xi(x,v) = (\bar v -v)S(x,v)/\kappa$, which is again locally Lipschitz thanks to the boundedness of $S$, then for any $(x,v)\in B_r\subset \Rdd$ we have
\begin{align*}
&\mathcal{F}[f](x,v)-\mathcal{F}[g](x,v) = \int_{\Rdd} \psi(x-y,v-w) f(y,w)\ dydw-\int_{\Rdd}\psi(x-z,v-u)  f(z,u)\ dzdu\\
&\qquad\qquad+ S(x,v)\left(\int_{\Rdd} \xi(y,w) f(y,w)\ dydw-\int_{\Rdd}\xi(z,u)  f(z,u)\ dzdu\right)\\
&= \int_{\Rddd} \left(\psi(x-y,v-w) -\psi(x-z,v-u)\right) + S(x,v)\left( \xi(y,w) -\xi(z,u)\right)\pi(y,w;z,u)
\end{align*}
Thus, taking the absolute value we have
\begin{align*}
&|\mathcal{F}[f](x,v)-\mathcal{F}[g](x,v)| \leq \int_{\Rddd} \left|\psi(x-y,v-w)-\psi(x-z,v-u) \right|\ d\pi(y,w;z,u)\\
&\quad+ |S(x,v)|\int_{\Rddd}\left|\xi(y,w)- \xi(z,u) \right|\ d\pi(y,w;z,u)
\leq \left(\|\psi\|_{\textrm{Lip}} +  |S| \|\xi\|_{\textrm{Lip}} \right)d_1(f,g),
\end{align*}
which implies \eqref{eq:hypo2}.
\end{proof}

In this case the results of Lemma \ref{lemm:stability} are sufficient to satisfy the hypothesis of Theorem 4.11 in \cite{CCR11}, in this way existence, uniqueness and stability of measure solutions for model \eqref{eq:MFkinetic1} are assured. The remarkable consequence of this theorem is the stability of the solutions in the Wasserstein 1-distance, which gives us a rigorous derivation of the kinetic equation \eqref{eq:MFkinetic1} as the limit of the a large number of agents of the system of ODEs \eqref{eq:MFmodel} in the Doubrushin's sense, for further details see \cite{CCR11}. 
\begin{rmk}
In the case of control with {\em selective  set} $\Omega(t)$, the previous results are not valid anymore, because it carries the discontinuous function $\chi_\Omega (x,v)$, which is not locally Lipschitz, therefore we need more refined estimates to prove a stability result. From the modeling view point a possible strategy consists in considering a mollified version of the $\chi_\Omega(x,v)$ in order to gain enough regularity,  \cite{CCR11,CCH13,H13}. More refined result for the mean-field limit  have been shown in the case of discontinuous kernels and they might be extended to this case,~\cite{CCH13}.
\end{rmk}
\section{Numerical simulations}\label{sec:three}
One of the main difficulty in the numerical solutions of kinetic models of type \eqref{eq:MFkinetic1}, arises in the approximation of the interaction operators, $\mathcal{H}[\cdot]$ and $\mathcal{F}[\cdot]$, which requires usually a huge computational efforts.
In order to reduce the computational complexity we use a fast numerical algorithm based on the approximation of the interaction operator through a Boltzmann-like equation, we leave further details of this procedure in Appendix \ref{app:B} and we refer to \cite{AlbiPareschi2013ab}.

We perform the simulations for $(x,v)\in\mathbb{R}^2\times\mathbb{R}^2$, defining an initial data $f(x,v,t=0) = f_0(x,v)$ normally distributed in space, with center in zero and unitary variance, and in velocity, uniformly distributed on a circumference of radius $5$. Our goal is to enforce alignment with respect to the desired velocity $\bar v=(1,1)^T$. The evolution of the kinetic equation \eqref{eq:MFkinetic1} is evaluated up to final time $T = 4$, with $\Delta t  = 0.01$ for the time discretization, considering $N_s = 5\times10^5$ sampled particles and scaling parameter $\varepsilon=\Delta t$.

Hence we consider the mean-field model \eqref{eq:MFkinetic1}, with the standard communication function,
$$H(x,y) = (1+|x-y|^2)^{-\gamma},$$ with $\gamma=10$, with this choice of $\gamma$, then the hypothesis of Theorem \ref{thm:CS} are not satisfied, and consequently the unconditional flocking is not guaranteed a-priori. 

We report in Figure \ref{fig:F1} the initial data and the final state reached at time $T=4$, depicting the spatial density $\rho(x,t) = \int_{\mathbb{R}^d} f(x,v,t)\ dv$ and showing at each point $x\in\mathbb{R}^2$ the value of the flux, represented by the following vector quantity $\rho u(x,t) = \int_{\mathbb{R}^d}f(x,v,t)\ dv$.
Note that in the right-hand side figure the flocking state is not reached, and the density is spreading around the domain following the initial radial symmetric distribution of the velocity field.
\begin{figure}[thpb]
\centering
\includegraphics[scale=0.325]{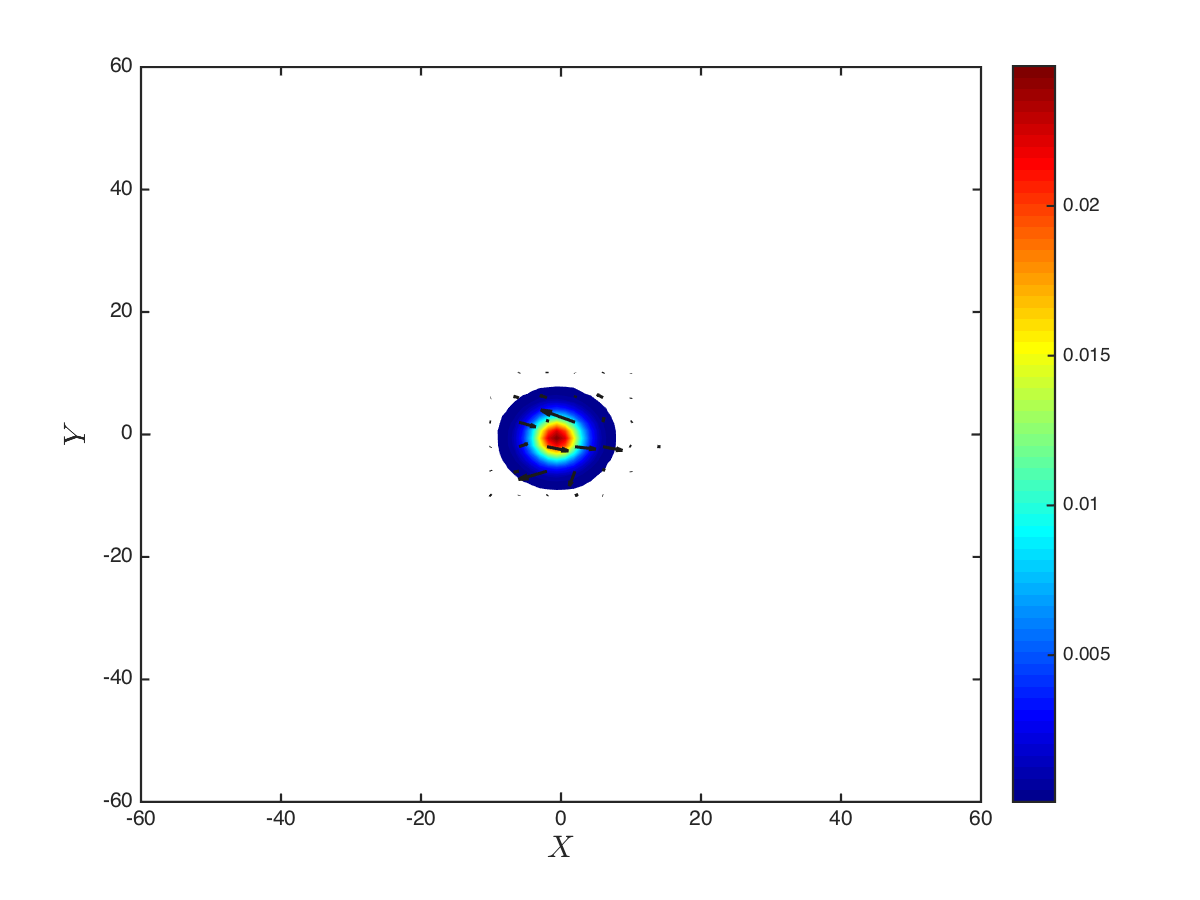}
\includegraphics[scale=0.325]{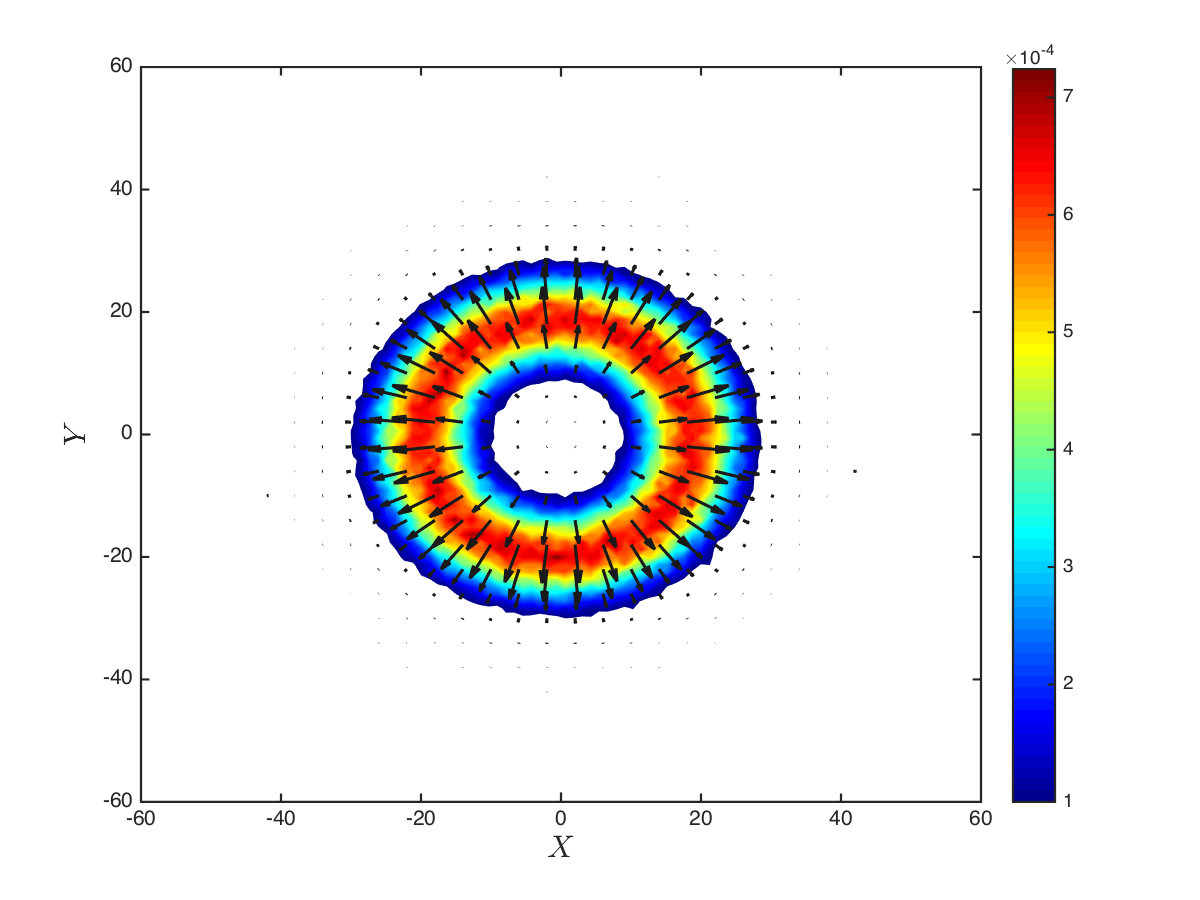}
\caption{On the left-hand side initial data, on the right-hand side configuration of the solution of \eqref{eq:MFkinetic1} at time $T=4$. In absence of control the density spread around the domain without reaching the consensus.}\label{fig:F1}
\end{figure}

Starting from this initial example we want to stabilize the evolution testing the performances of the different control policies in presence of a selective function and in the case of a selective set.

\subsection{Localized stabilization}
We compare the two control approaches in the case of a selective control only capable of acting on a confined ball of the space domain. Hence, we define $$B_R= \{ x\in\mathbb{R}^2\ s.t.\ |x|^2\leq R^2\},$$ and we study the evolution of the mean-field equation \eqref{eq:MFkinetic1} in presence of controls of type \eqref{xi}  and \eqref{zeta}. 
\paragraph{Test 1a: filtered control}\label{test1}
We study the evolution of the system in presence of a selective control, where the selective function is $S(x,v) = \chi_{B_R}(x)$, and the filtered control defined by
\begin{equation}\label{eq:ctrl1}
\mathcal{K}_\xi[f](x,v):=S(x,v)\xi(t) = \frac{\chi_{B_R}(x)}{\kappa}\int_{B_{R}}(\bar v- w)f(y,w,t)\ dy dw\l.
\end{equation}
Moreover in order to compare the behavior of the action of the selective control we define respectively the running cost, $L[\cdot,\cdot]$ and the total cost $C_T$ as
\begin{align}\label{eq:funcJT} 
L[f,\xi](t):= \int_{\mathbb{R}^4}|v-\bar v|^2f(x,v,t)\ dx dv + {\kappa}|\xi(t)|^2, \textrm{ and } C_T  :=\int_0^TL[f,\xi](t)\ dt.
\end{align}
The numerical results in Figure \ref{Fig:F2} shows the higher influence of the control for increasing value of $R$, i.e. for longer influence of the control on the density, and decreasing value of the penalization parameter $\kappa$.
\begin{figure}[thpb]
\centering
\begin{tabular}{@{}c@{\hspace{1mm}}c@{\hspace{1mm}}c@{\hspace{1mm}}c@{}}
\hline
&$\kappa = 4$ & $\kappa = 1$& $\kappa = 0.25$\\
\hline
\sidecap{$R=5$} 
&
\includegraphics[trim=30 10 40 20,clip,width=0.25\textwidth]{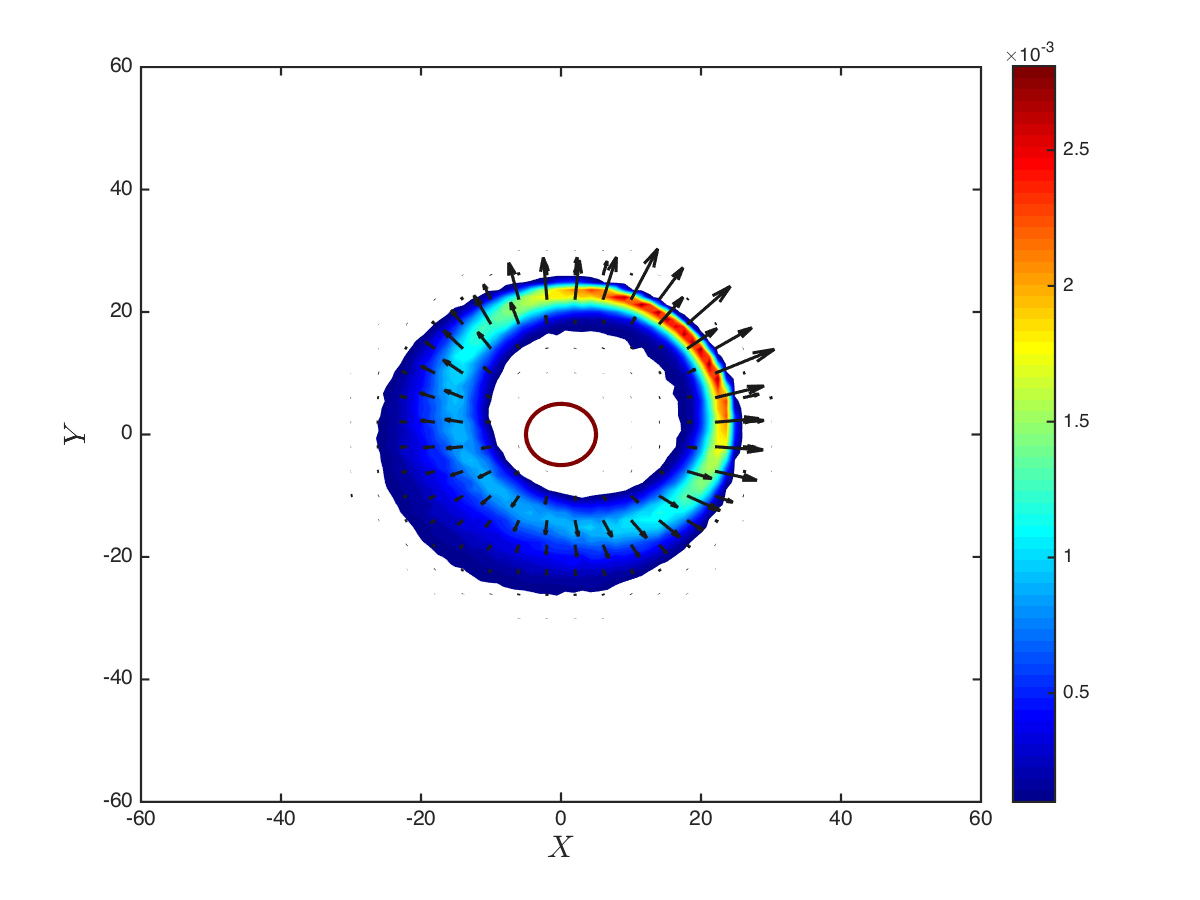}&
\includegraphics[trim=30 10 40 20,clip,width=0.25\textwidth]{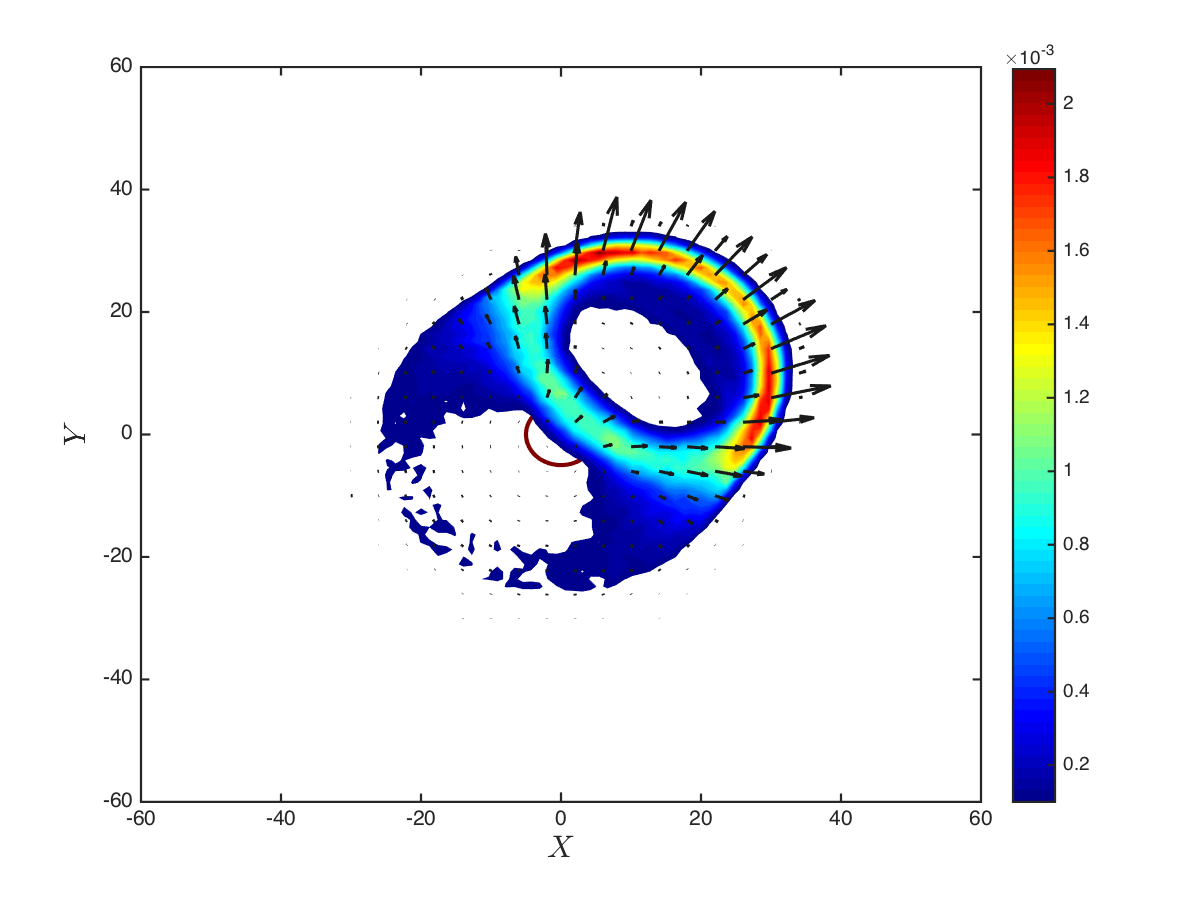}&
\includegraphics[trim=30 10 40 20,clip,width=0.25\textwidth]{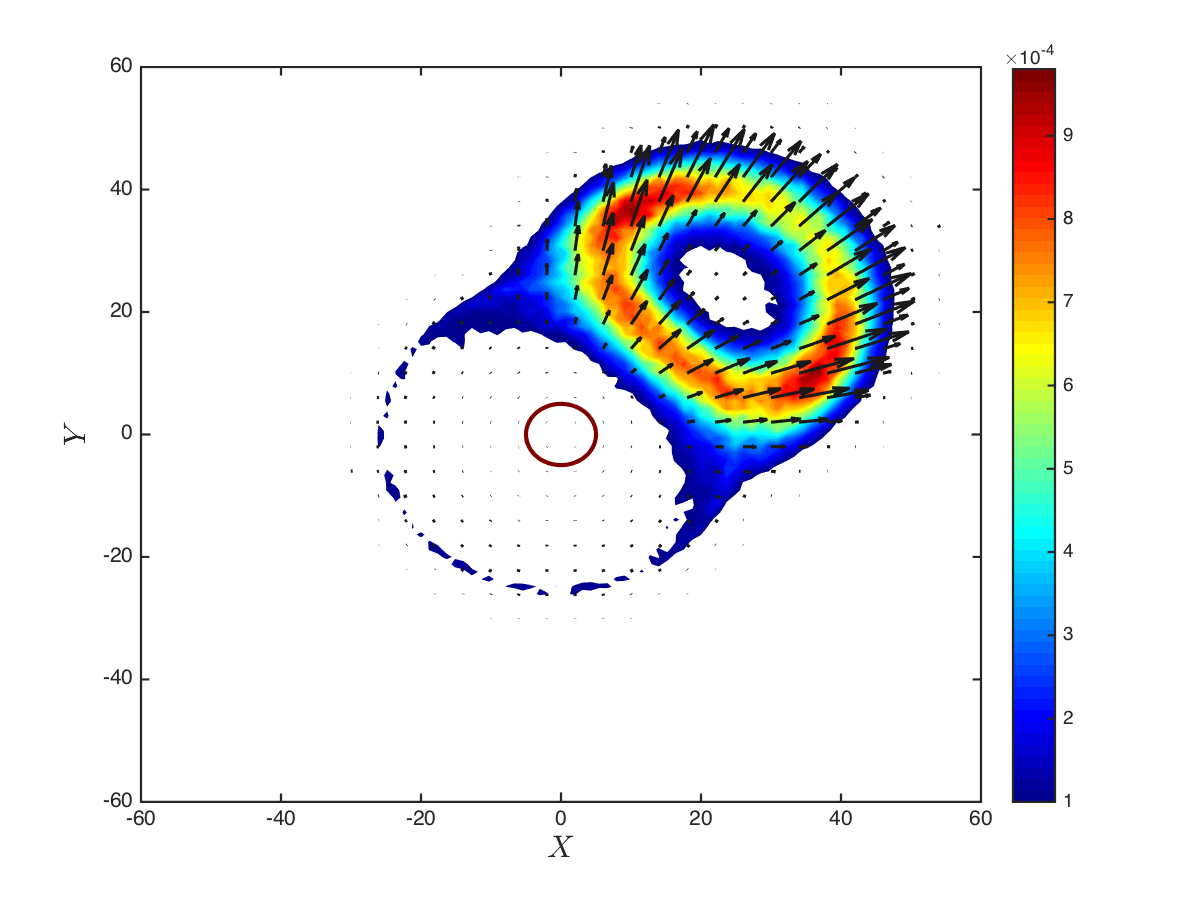}
\\
& $C_T=2.7908   $  & $C_T= 1.3954   $ & $C_T=0.8095$ \\
\hline
 \hline
\sidecap{$R=10$}
&
\includegraphics[trim=30 10 40 20,clip,width=0.25\textwidth]{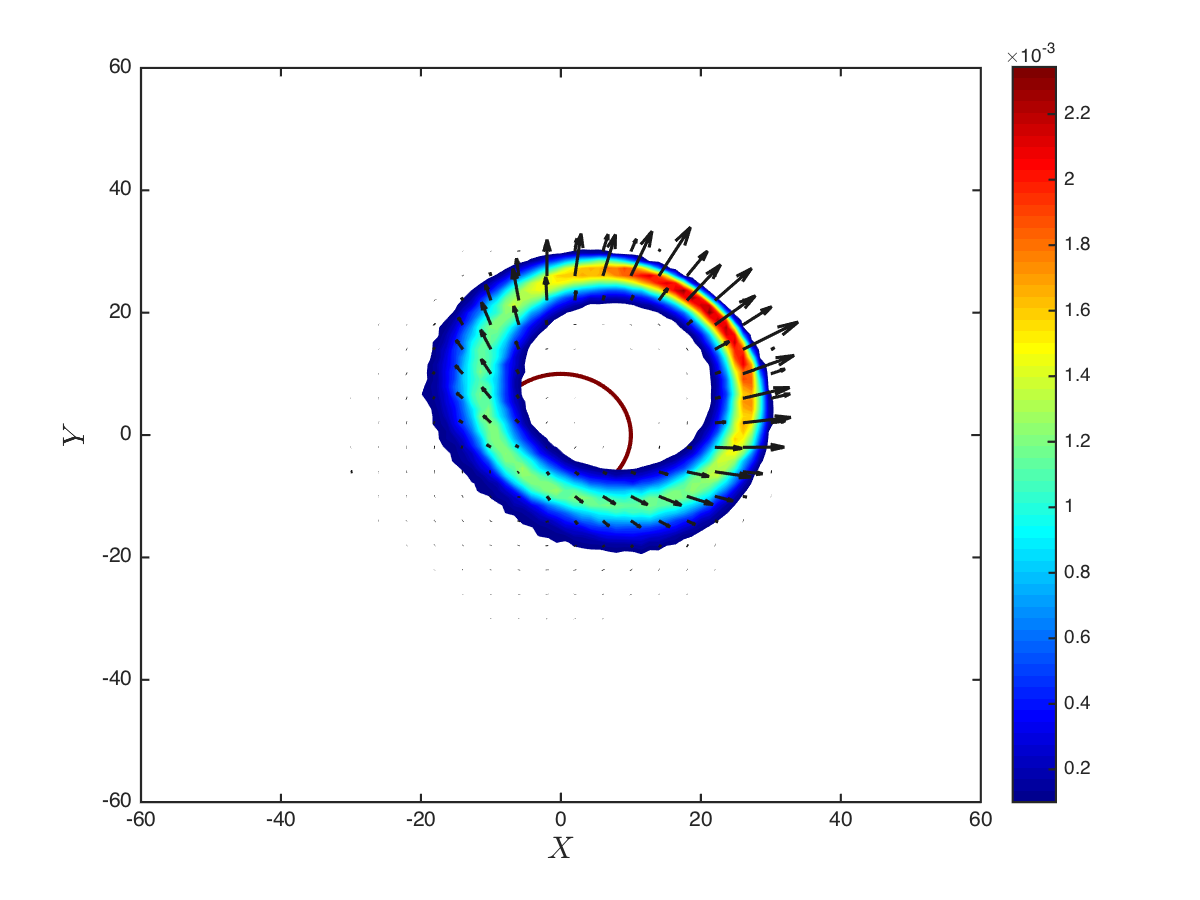}&
\includegraphics[trim=30 10 40 20,clip,width=0.25\textwidth]{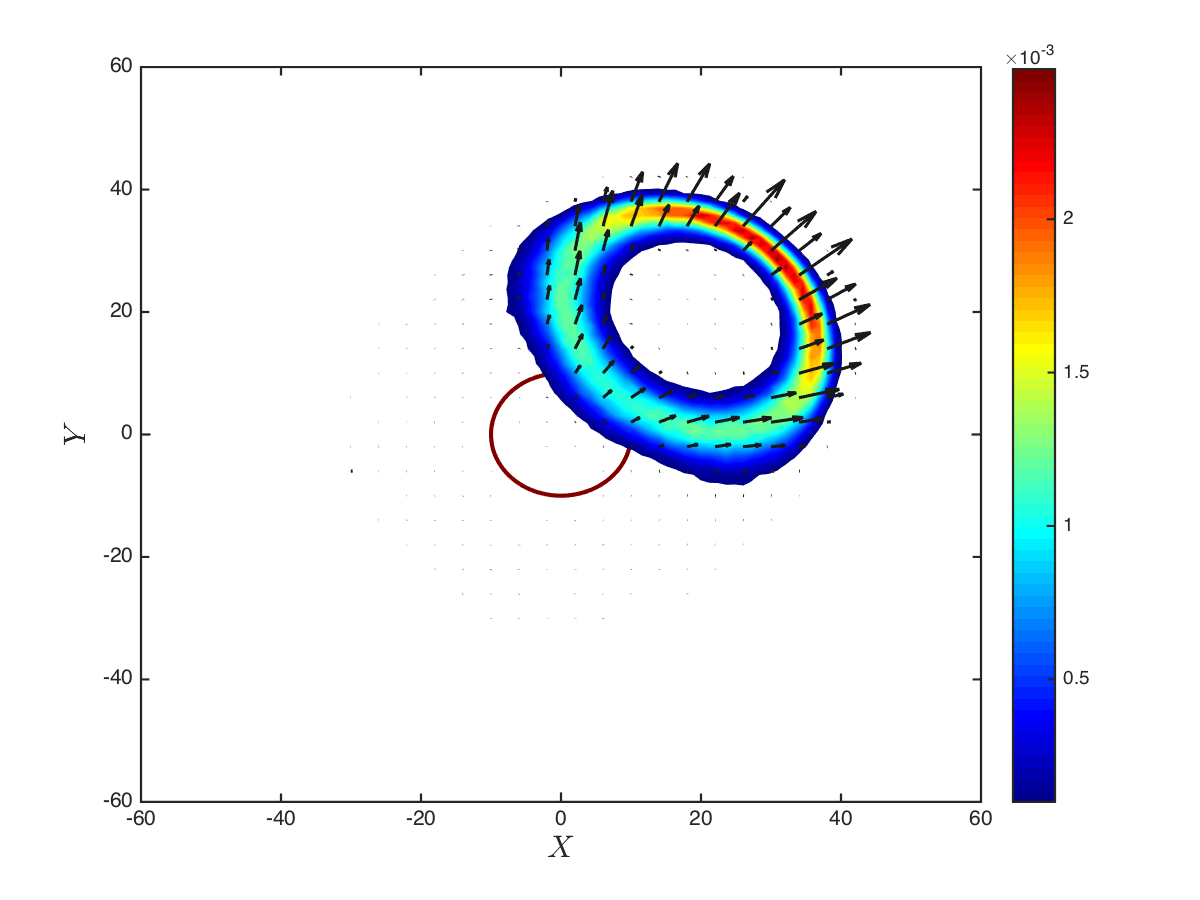}&
\includegraphics[trim=30 10 40 20,clip,width=0.25\textwidth]{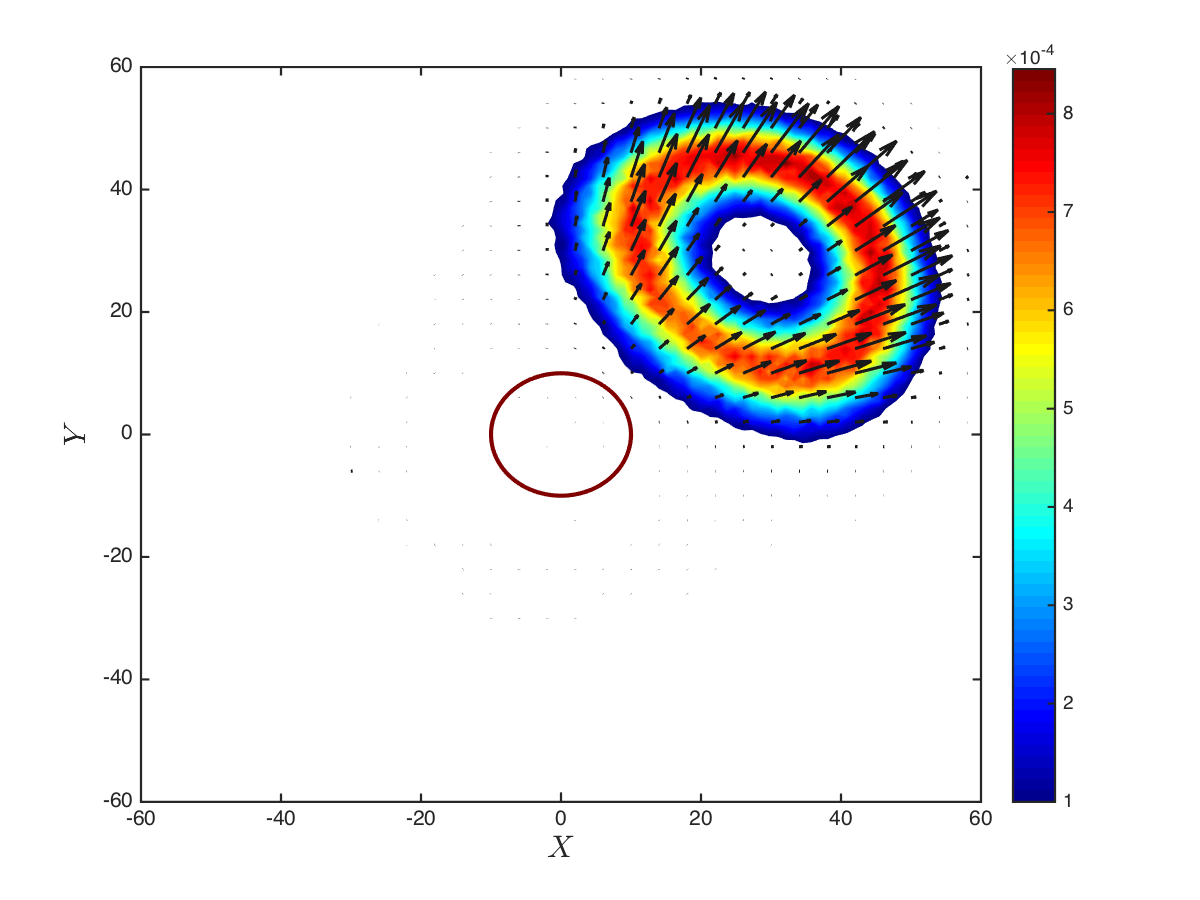}
\\
& $C_T=  2.7472 $  & $C_T= 1.4141$ & $C_T=0.7992 $ \\
 \hline
\end{tabular}

\caption{(Test 1a): Final solution at time $T=4$ of equation \eqref{eq:MFkinetic1} with control acting through a selective function $S(x,v)=B_R(x)$. The top and bottom pictures represent the action of the control, respectively  for $R=5$, and $R=10$, and for different values of penalization parameter $\kappa$.  Value of the cost functional \eqref{eq:funcJT} are reported below each simulation. }\label{Fig:F2}
\end{figure}

In Figure \ref{fig:F4} we additionally explore the range of parameters $(R,\kappa)\in[0,50]\times[0,10]$, with respect to the following quantities: ${A}_{R,\kappa}:=\int |v-\bar v|^2f(x,v,T)\ dxdv$ measuring the alignment at final time for $T = 4$, and the following cost ${C}_{R,\kappa}:=\frac{1}{\kappa T}\int^T_0\int |v-\bar v|^2\Qs(x,v)f(x,v,t)\ dxdv\ dt$.
 
\begin{figure}[thpb]
\centering
\includegraphics[scale=0.35]{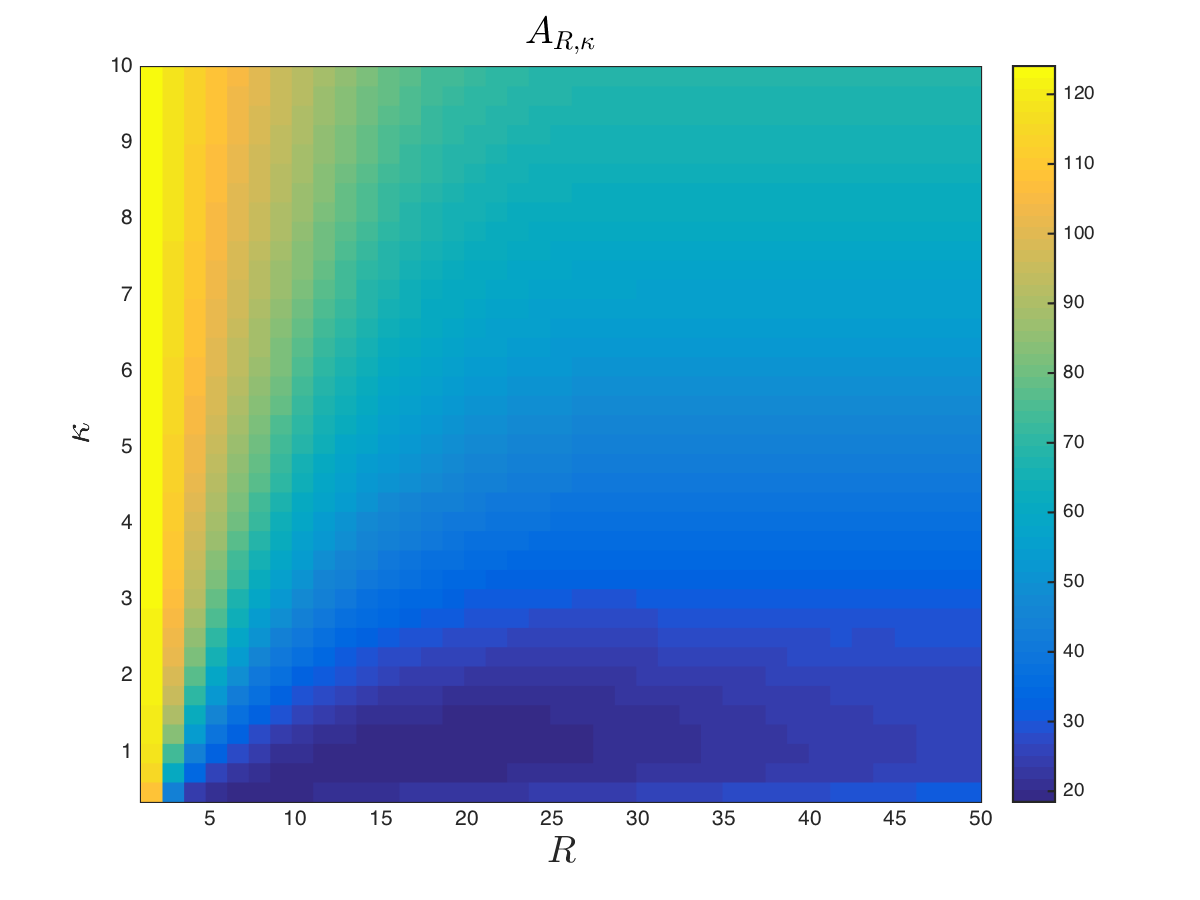}
\includegraphics[scale=0.35]{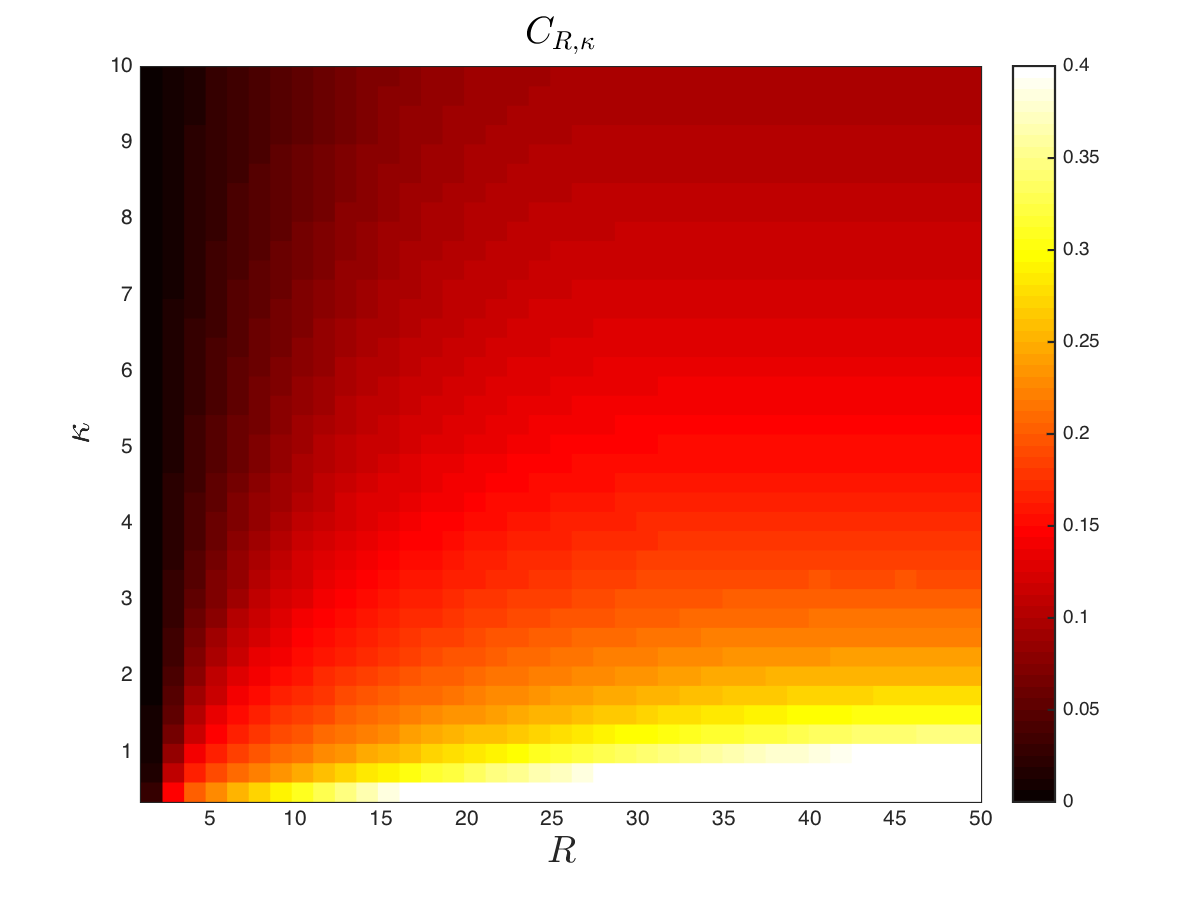}
\caption{We set the region of parameters to be $(R,\kappa)\in[0.1,50]\times[0.1,10]$, and we study (left) the convergence to final flocking state, 
${A}_{R,\kappa}$, with $T = 4$, darker regions represent closer state to global alignment;  (right) the total cost of the control ${C}_{R,\kappa}$.
}\label{fig:F4}
\end{figure}
\paragraph{Test 1b: pointwise control.}\label{test2}
We now study the action of a control in presence of the selective set $\mathcal{S}(t)= B_R(x)$, therefore  the control is active only on the density $f(x,v,t)$ which is included inside the ball of radius $R$, and is defined as follows
\begin{equation}\label{eq:ctrl2}
\mathcal{K}_\zeta[f](x,v) :=\zeta(x,v,t)= \frac{1}{\kappa}(\bar v- v)\chi_{B_R}(x).
\end{equation}
and similarly to the previous test we associate the following continuous cost functional
\begin{align}\label{eq:funcJT2} 
C_T  :=\int_0^T L[f,\zeta](t)\ dt, \quad \textrm{ with }\quad  L[f,\zeta](t)= \int_{\mathbb{R}^4}\left(|v-\bar v|^2 + {\kappa}|\zeta(x,v,t)|^2\right)f(x,v,t)\ dx dv.
\end{align}
We report in Figure \ref{Fig:F4} the final solution of \eqref{eq:MFkinetic1} at time $T= 4$, under the influence of the control $\zeta(x,v,t)$, which shows a stronger influence with respect to the previous case in Test \ref{test1}, indeed the alignment is obtain in almost every case,
thanks to the local influence of the control which is acting on every point inside the ball $B_R$.

In Figure \ref{Fig:F5} we report the evolution of the running cost $L[f,\cdot](t)$ defined respectively in \eqref{eq:funcJT}, and \eqref{eq:funcJT2}.  The plots show a not-monotone decay, until a plateau is reached, which occurs once the control is no-more active and the velocity field reaches an equilibrium.
\begin{figure}
\centering
\begin{tabular}{@{}c@{\hspace{1mm}}c@{\hspace{1mm}}c@{\hspace{1mm}}c@{}}
\hline
&$\kappa = 4$ & $\kappa = 1$& $\kappa = 0.25$\\
\hline
\sidecap{$R=5$} 
&
\includegraphics[trim=30 10 40 20,clip,width=0.25\textwidth]{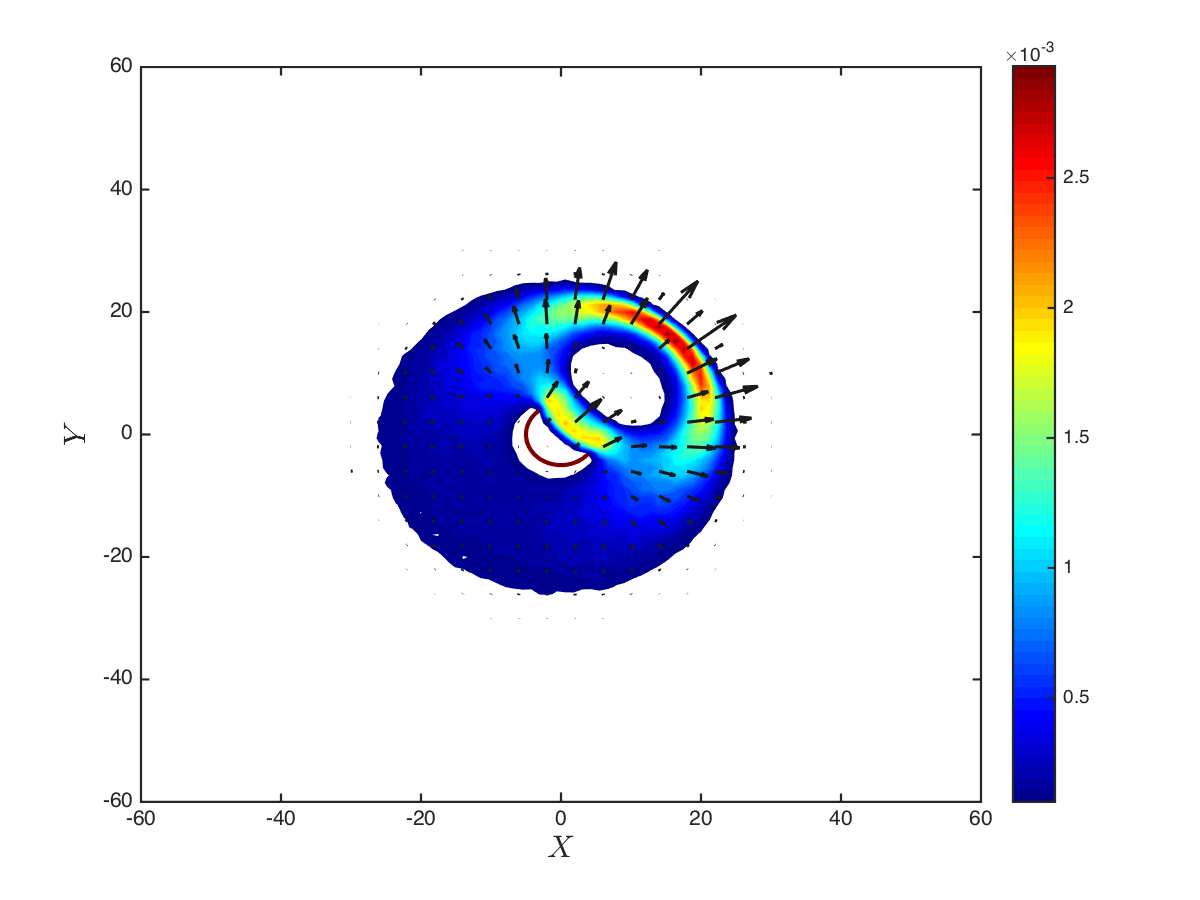}&
\includegraphics[trim=30 10 40 20,clip,width=0.25\textwidth]{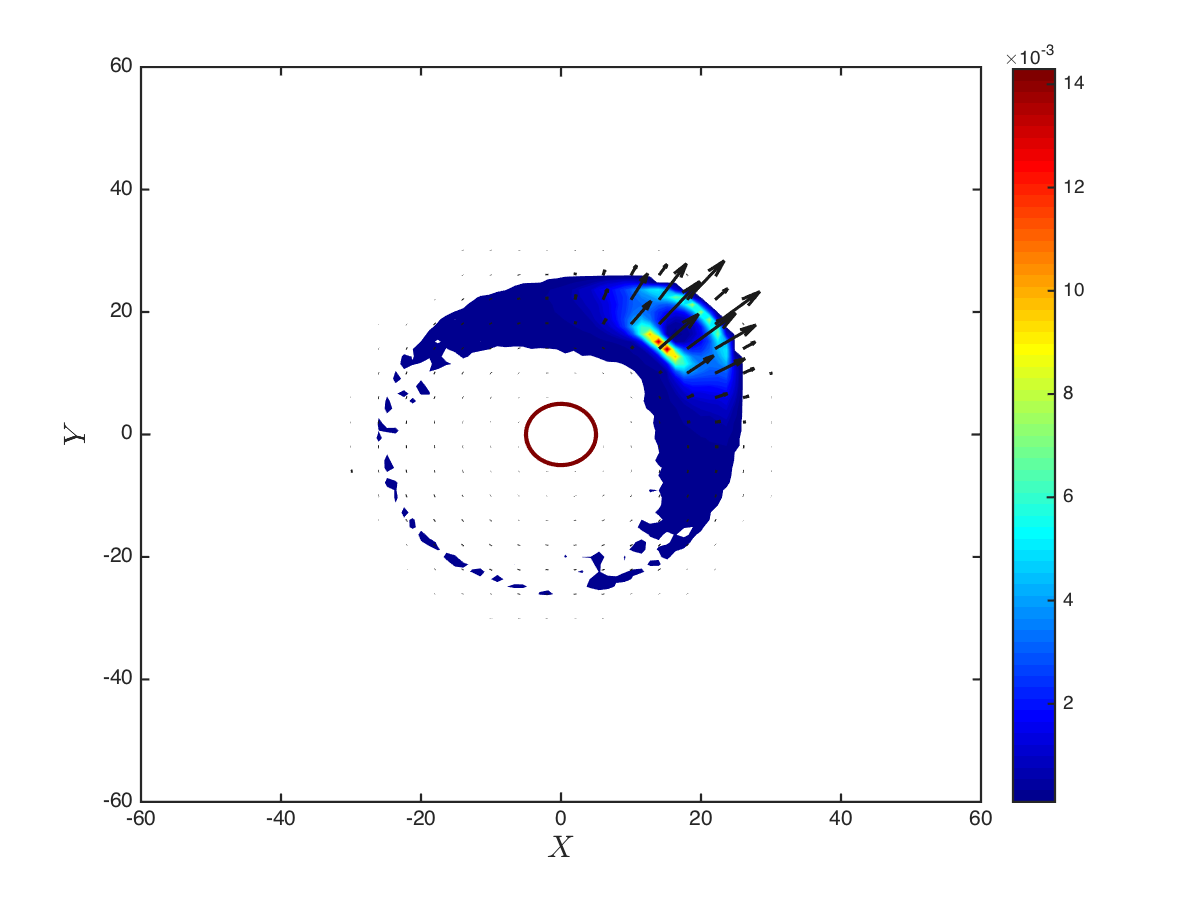}&
\includegraphics[trim=30 10 40 20,clip,width=0.25\textwidth]{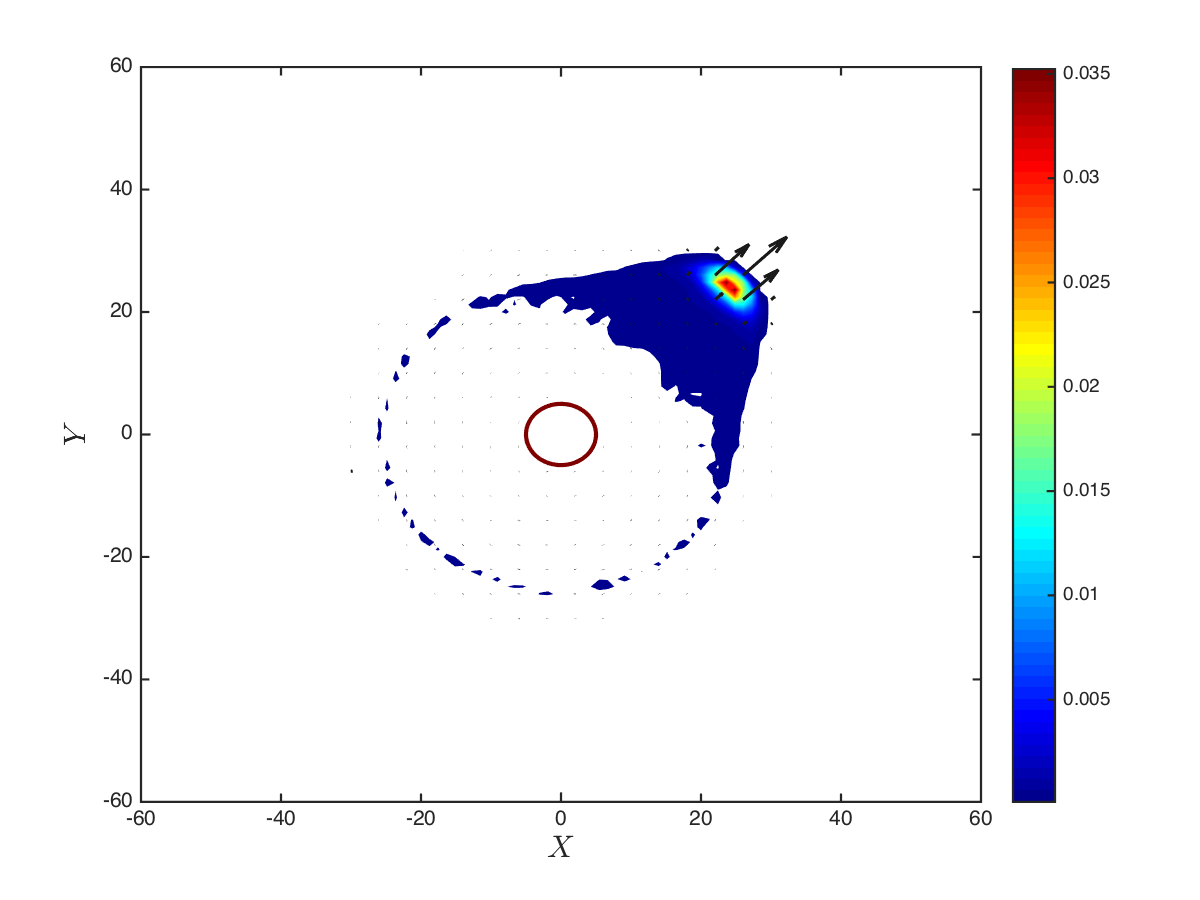}
\\
& $C_T= 2.2574 $  & $C_T= 0.9856 $ & $C_T= 0.4898 $ \\
\hline
 \hline
\sidecap{$R=10$}
&
\includegraphics[trim=30 10 40 20,clip,width=0.25\textwidth]{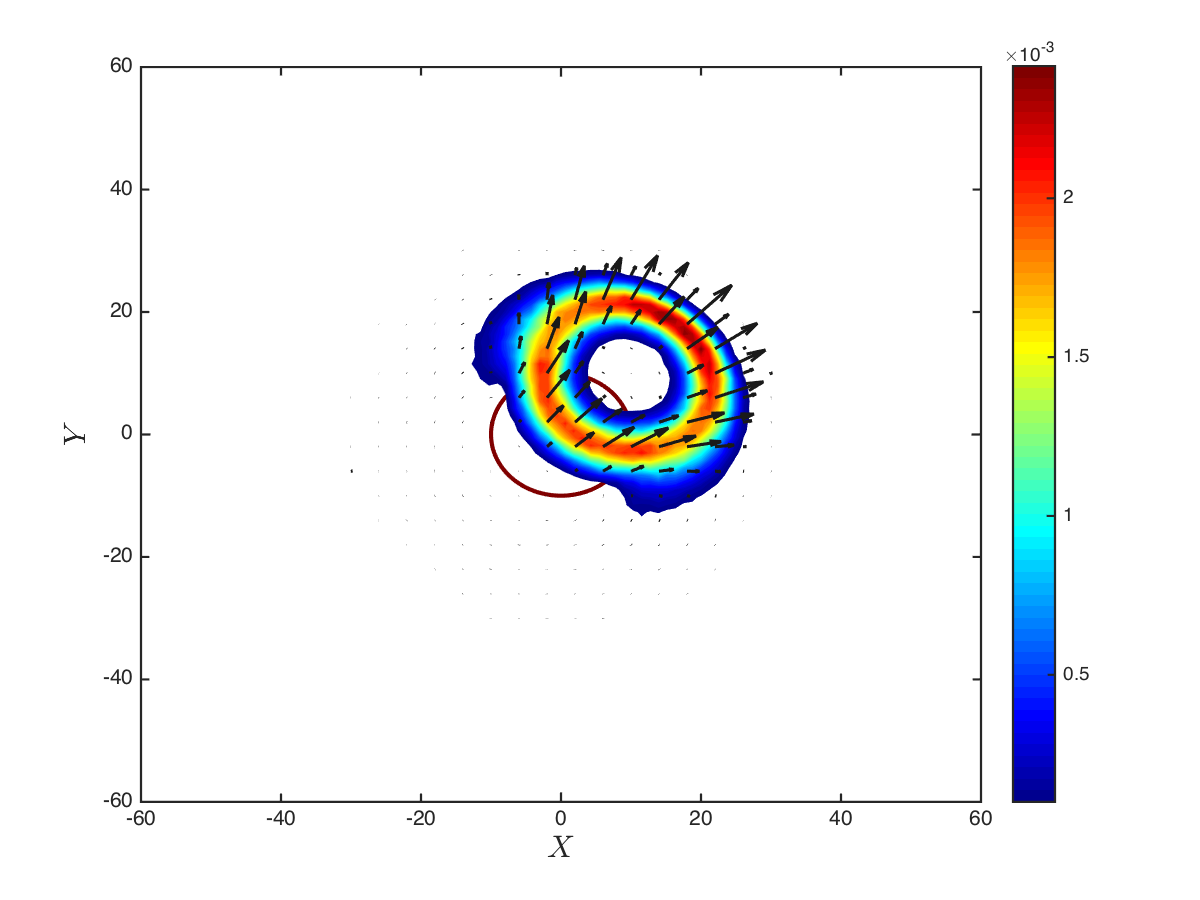}&
\includegraphics[trim=30 10 40 20,clip,width=0.25\textwidth]{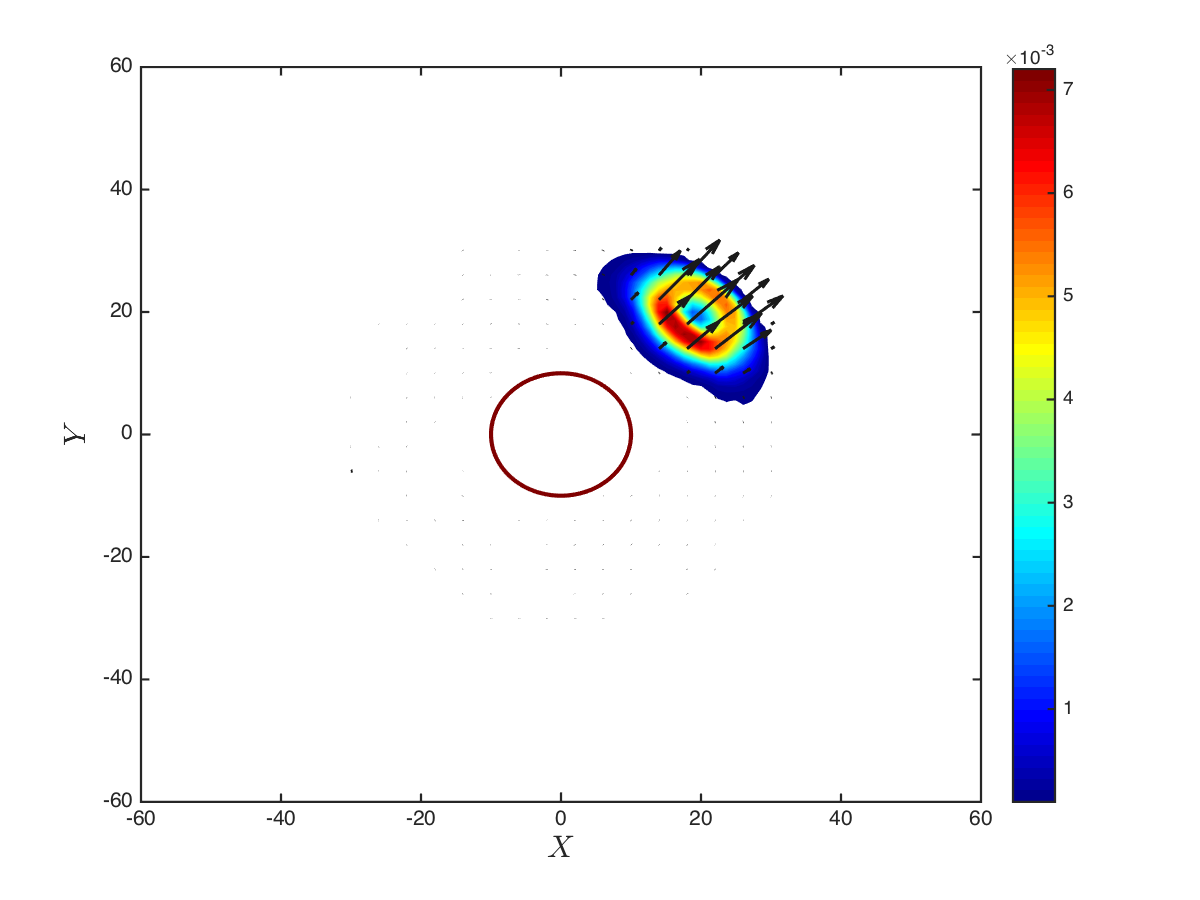}&
\includegraphics[trim=30 10 40 20,clip,width=0.25\textwidth]{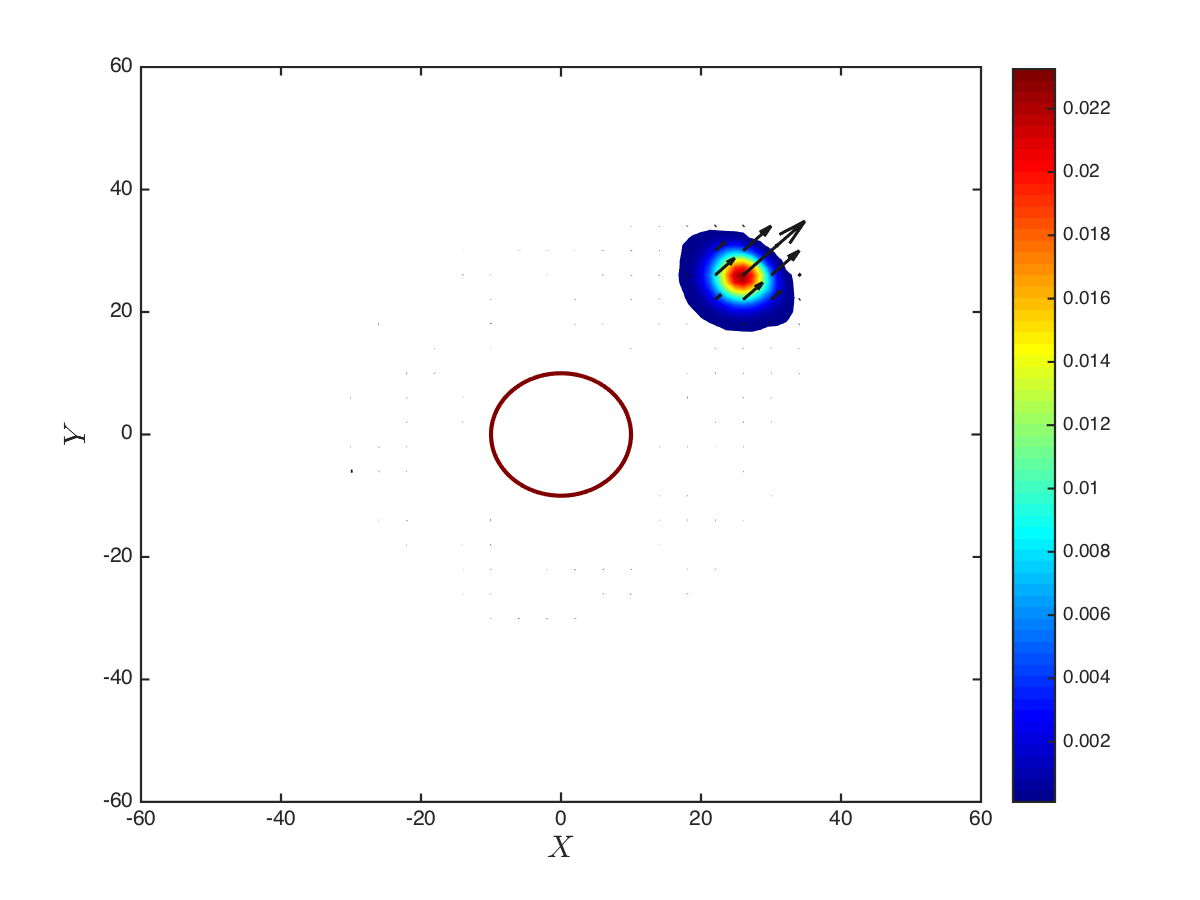}
\\
& $C_T=2.0704  $  & $C_T=1.0815   $ & $C_T= 0.3550 $ \\
 \hline
\end{tabular}
\caption{(Test 1b): Final states at time $T=4$ of the density $f(x,v,T)$ with control acting only on the set $B_R(x)$. The top and bottom pictures represent the action of the control, respectively  for $R=5$, and $R=10$, and for different values of penalization parameter $\kappa$.  Value of the cost functional \eqref{eq:funcJT2} are reported below each simulation.}\label{Fig:F4}
\end{figure}

\begin{figure}
\centering
\begin{tabular}{@{}c@{\hspace{1mm}}c@{\hspace{1mm}}c@{\hspace{1mm}}c@{}}
\hline
&$\kappa = 4$ & $\kappa = 1$& $\kappa = 0.25$\\
\hline
\sidecap{$L[f,\cdot](t)$} 
&
\includegraphics[trim=30 10 40 20,clip,width=0.25\textwidth]{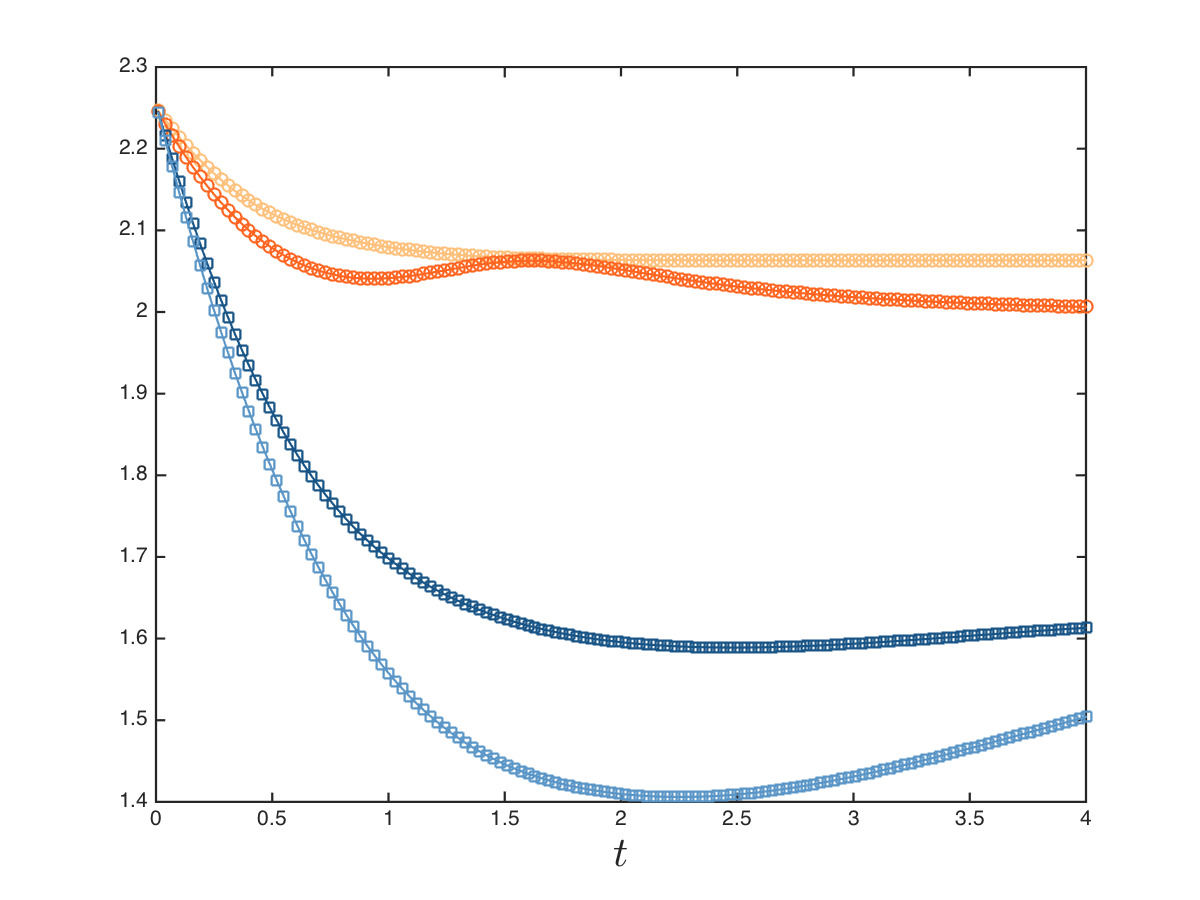}&
\includegraphics[trim=30 10 40 20,clip,width=0.25\textwidth]{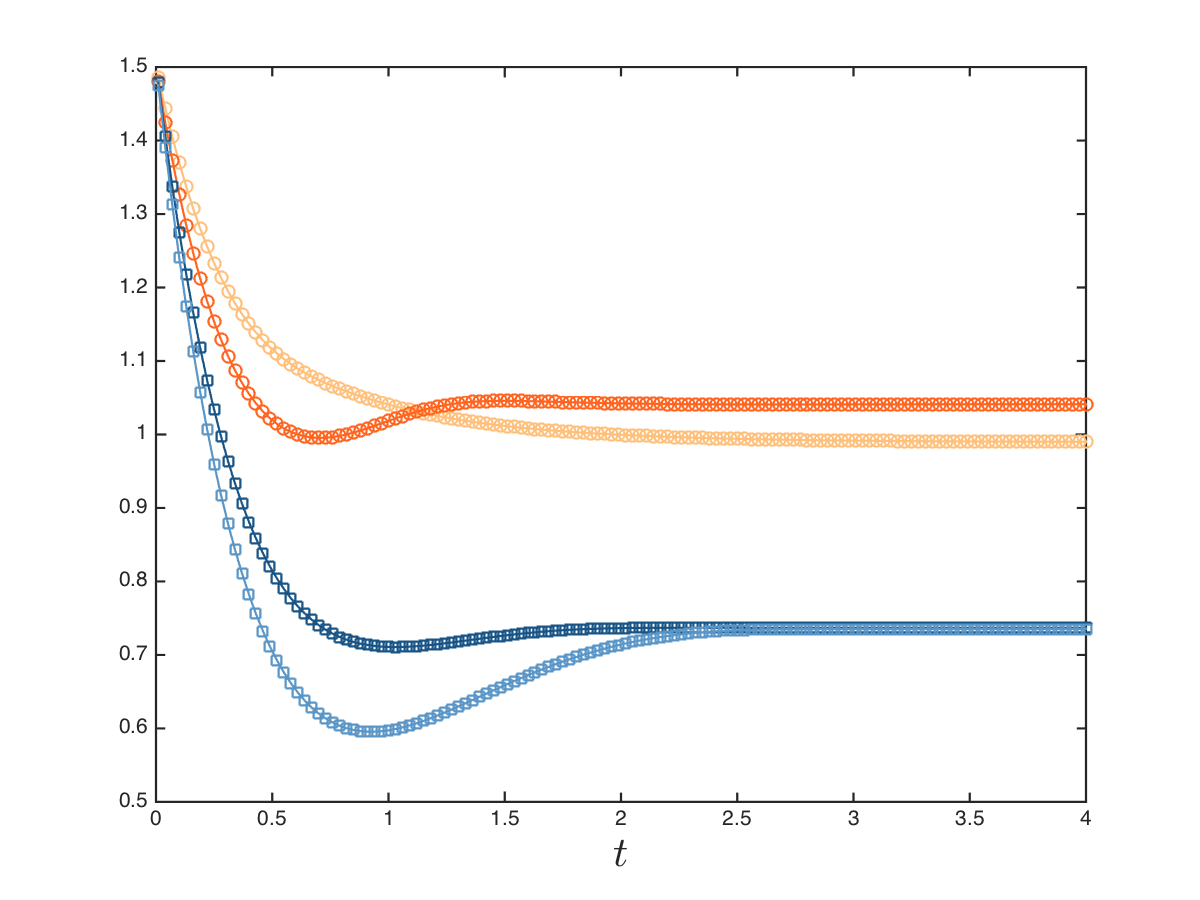}&
\includegraphics[trim=30 10 40 20,clip,width=0.25\textwidth]{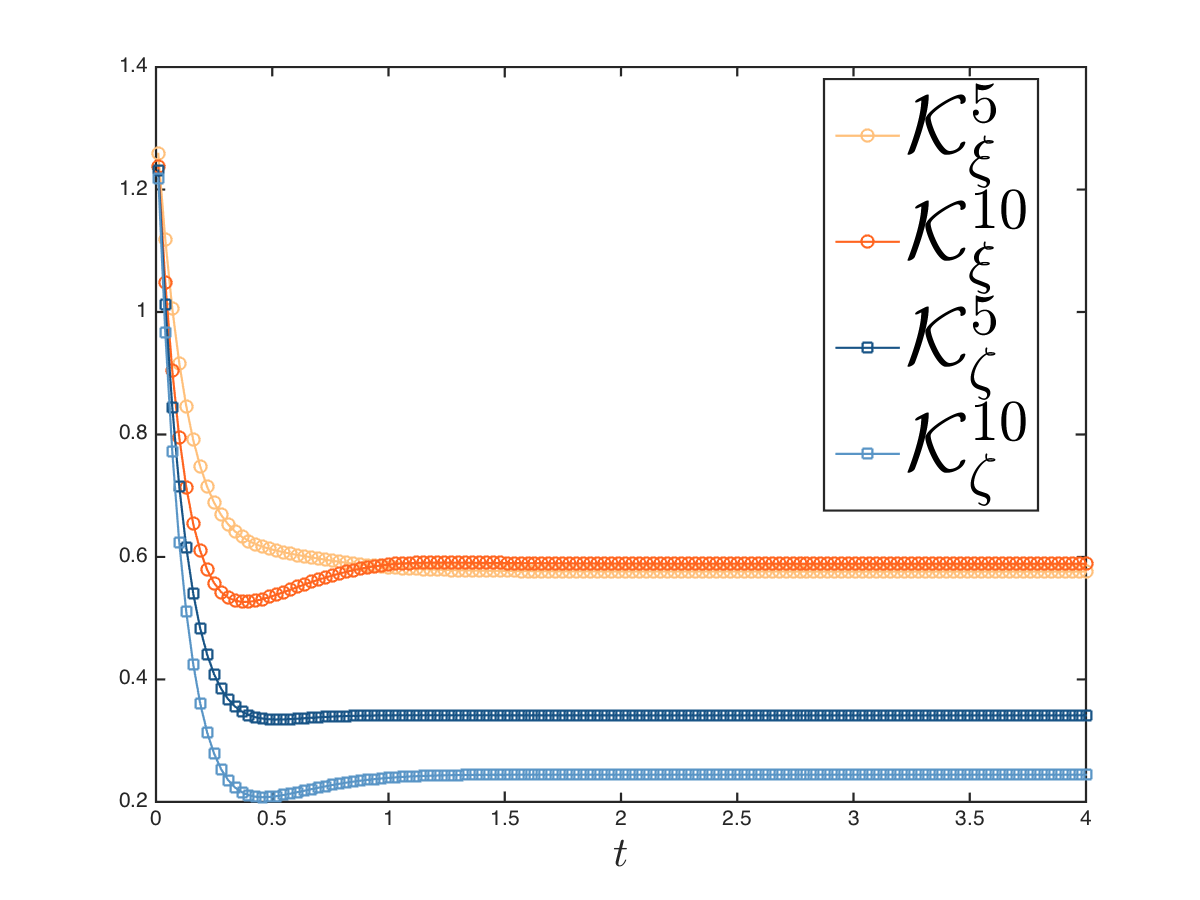}
\\
 \hline
\end{tabular}
\caption{(Test 1a and 1b): Evolution of the stage cost $L[f,\cdot](t)$ at variance of the parameters $R=\{5,10\}$ and $\kappa=\{4,1,0.25\}$ of the tests and for the different controls \eqref{eq:ctrl1} and \eqref{eq:ctrl2}. The stage cost reaches a plateau once the control is inactive and the velocity field reaches an equilibrium.}\label{Fig:F5}
\end{figure}

%

\subsection{Variational stabilization}
In \cite{CFPT12,PRT:15}, the authors propose a sparse control strategy to stabilize a flocking system, targeting the farthest agents from the desired state.  Inspired by these results we define the following {\em selective criteria}, 
\begin{align}\label{eq:varap}
x_\tau^* = \arg\max_{x}\int_{\mathbb{R}^2}\int_{B_{\varrho}(x)}|\bar v-w|^2f(y,w,\tau) \ d(y,w),
\end{align}
where at time $\tau\in[0,T]$ we maximize over the possible center $x\in\overline{\textrm{supp} f}$ of the ball $B_\varrho(x)$, for a given radius $\varrho>0$. Thus the ball $B_\varrho(x_\tau^*)$ represents the selective set for the pointwise control {\eqref{zeta}}, and its characteristic function will be used as the {\em selective function} for the filtered control {\eqref{xi}}.

Note that in general the solution of \eqref{eq:varap}  is computational demanding, to reduce such cost we solve the minimization problem only on a finite sequence of times $\{\tau_\ell\}_{\ell=1}^{L-1}$ of the full time horizon $[0,T]$, and we relay on a Monte-Carlo procedure to approximate the integral in \eqref{eq:varap}.

We report in Figure \ref{fig:7} the comparison of the system density at time $T=4$, having fixed the parameter $\kappa=2$, for both the controls, and for different choices of the radius $\varrho$, $\varrho=\{1.5,2.5,5\}$. The evaluation of \eqref{eq:varap} is done over a  sequence of $L=40$ time intervals, thus for every $t\in[\tau_\ell,\tau_{\ell+1})$ the control remains  defined on on the ball $B_\varrho (x^*_{\tau_\ell})$, we report in each figure the sequence of selective balls, $\{B_\varrho(x^*_{\tau_\ell})\}_{\ell=1}^{L-1}$.  
The first row shows the action of the filtered control \eqref{xi}, whether the second line depicts the action of the pointwise control \eqref{zeta}. 
On one hand in the second row the control seems to perform a better alignment, for a cheaper total cost, $C_T$ in the case of medium values of radius $\varrho$, on the other hand for smaller values of the radius the filtered control seems to perform better. This is more evident in Figure \ref{fig:8} where we plot the running costs $L[f,\cdot](t)$, computed as in \eqref{eq:funcJT} and \eqref{eq:funcJT2}, and where lower cost is obtained respectively, for small value of the radius $\varrho=1.5$ by means of filtered control \eqref{xi}, and for medium-high values of the radius $\varrho=\{2.5,5\}$ in the case of the pointwise control \eqref{zeta}.

\begin{figure}[th]
\centering
\begin{tabular}{@{}c@{\hspace{1mm}}c@{\hspace{1mm}}c@{\hspace{1mm}}c@{}}
\hline
&$\varrho = 1.5$ & $\varrho = 2.5$& $\varrho= 2.5$\\
\hline
\sidecap{$\mathcal{K}_\xi[f](x,v)$} 
&
\includegraphics[trim=30 10 40 20,clip,width=0.25\textwidth]{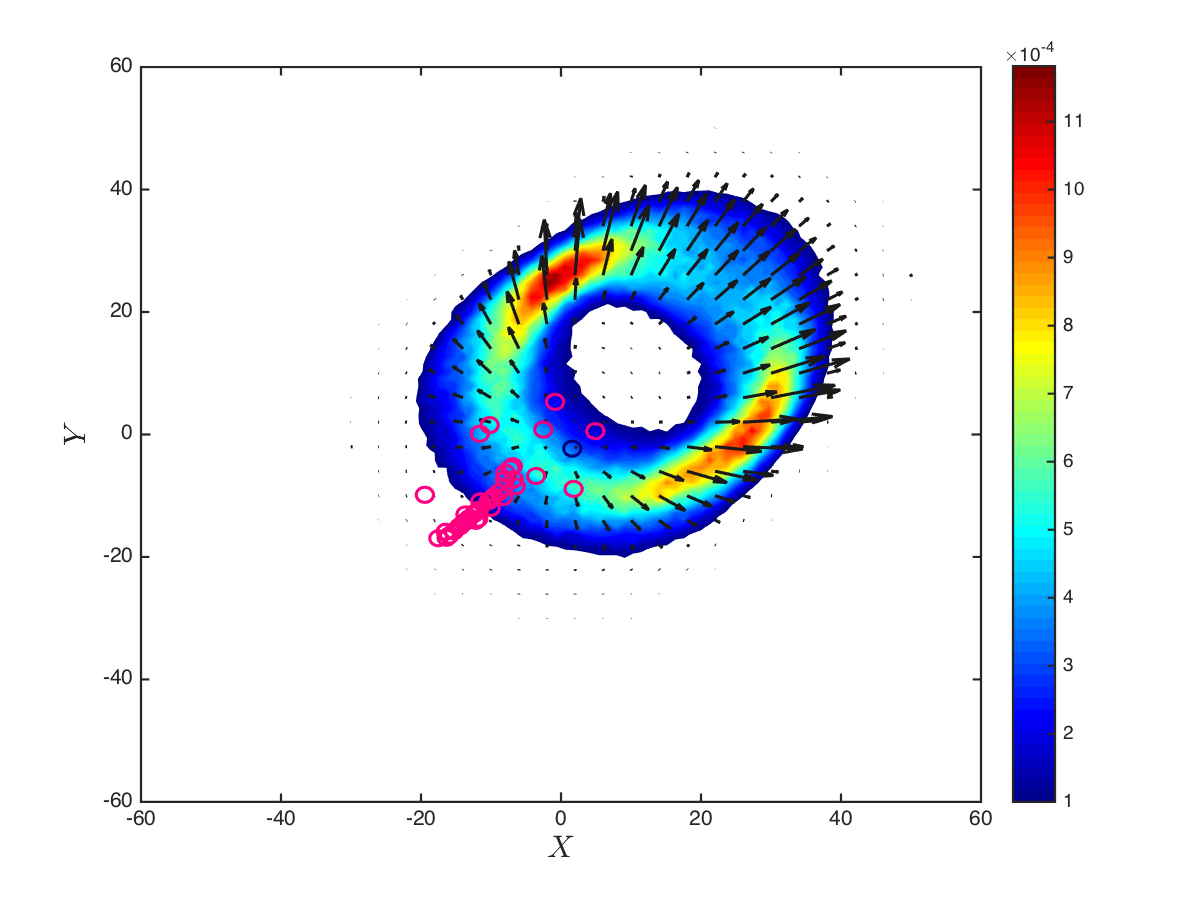}&
\includegraphics[trim=30 10 40 20,clip,width=0.25\textwidth]{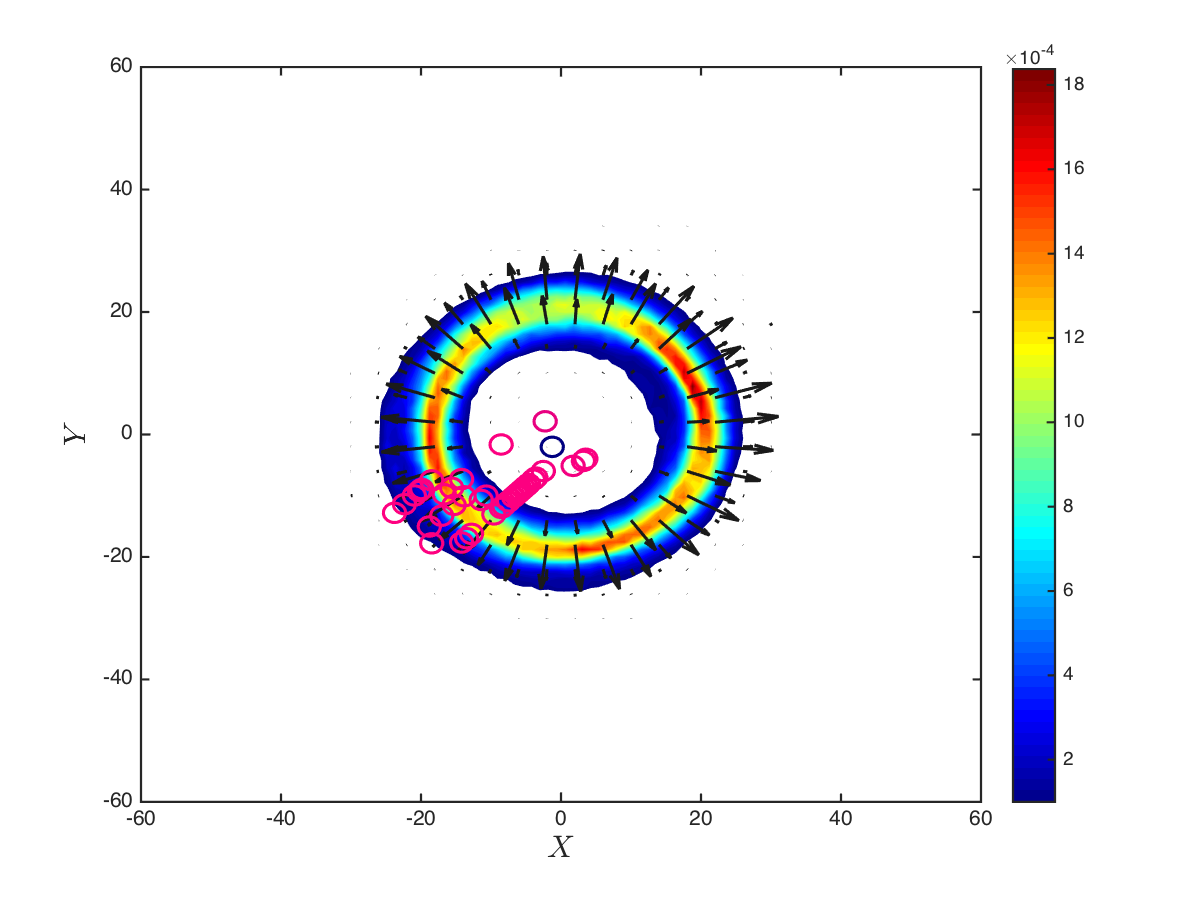}&
\includegraphics[trim=30 10 40 20,clip,width=0.25\textwidth]{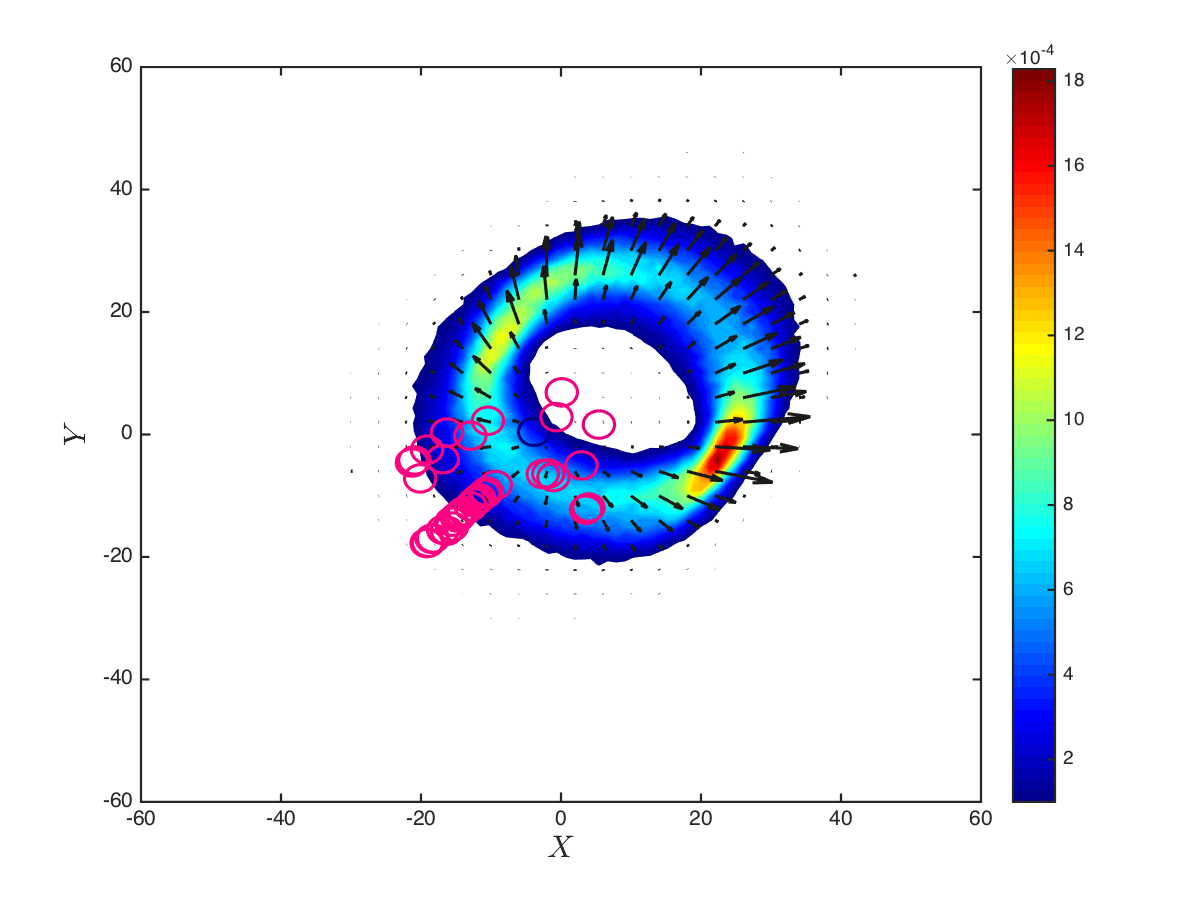}
\\
& $C_T=0.7568 $  & $C_T= 1.2085 $ & $C_T= 0.8223 $ \\
\hline
 \hline
\sidecap{$\mathcal{K}_\zeta[f](x,v)$}
&
\includegraphics[trim=30 10 40 20,clip,width=0.25\textwidth]{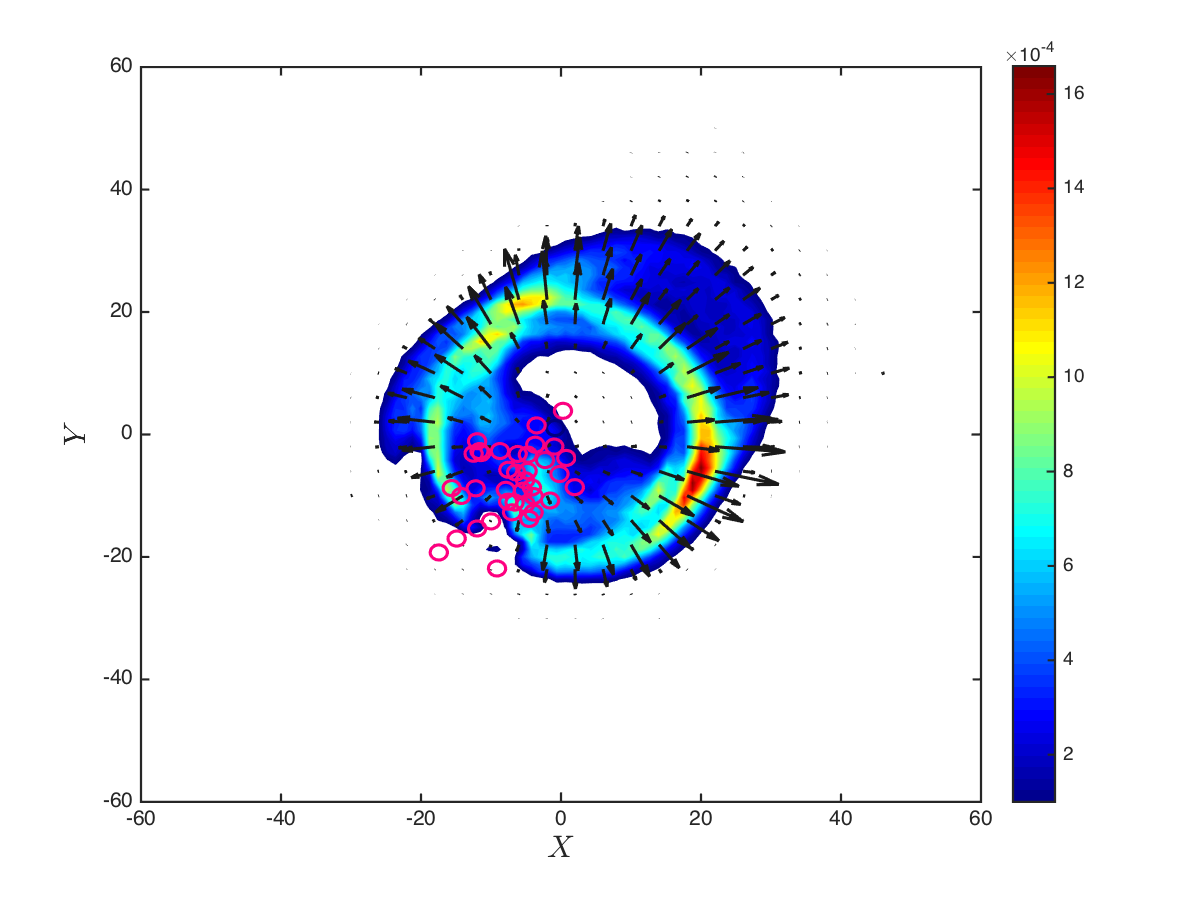}&
\includegraphics[trim=30 10 40 20,clip,width=0.25\textwidth]{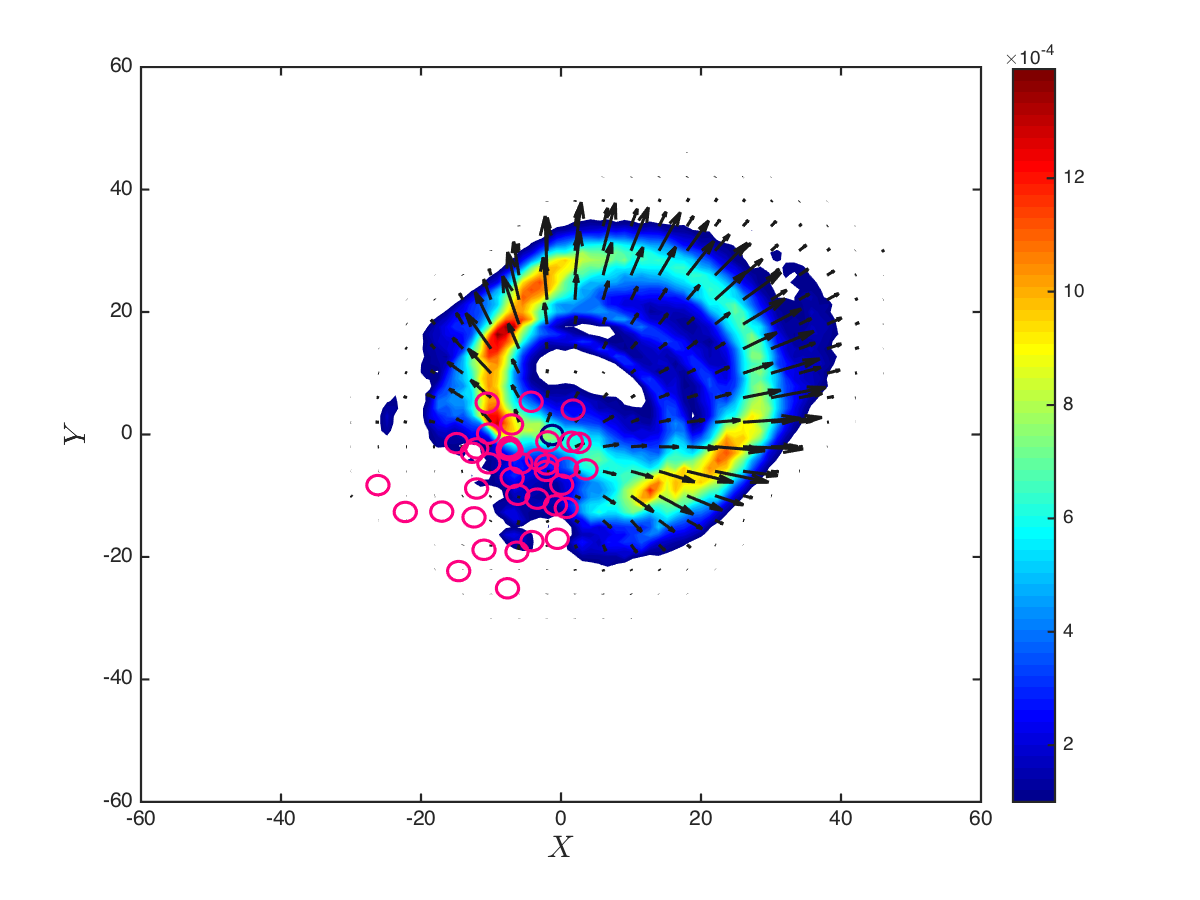}&
\includegraphics[trim=30 10 40 20,clip,width=0.25\textwidth]{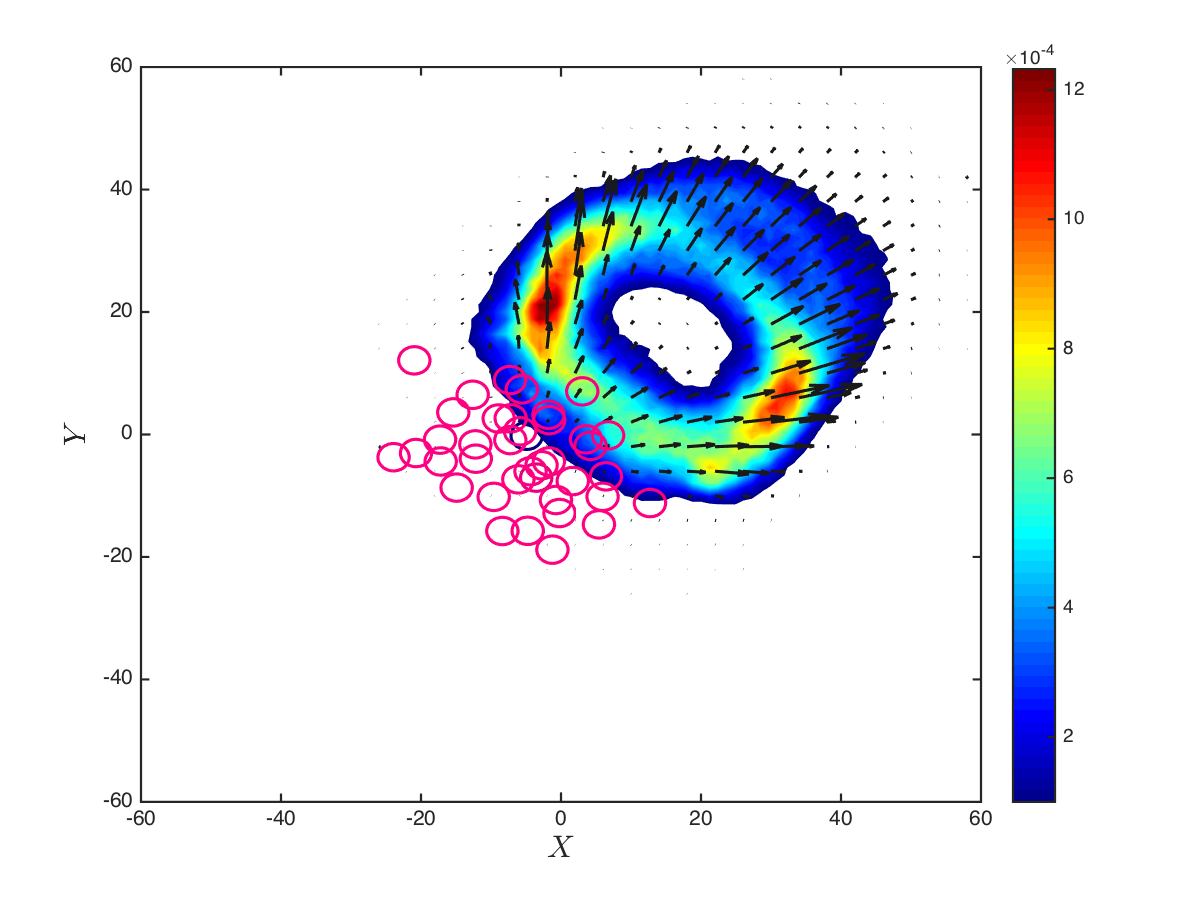}
\\
& $C_T=0.9826  $  & $C_T=0.7169   $ & $C_T= 0.5160 $ \\
 \hline
\end{tabular}
\caption{ Final states at time $T=4$ of the density $f(x,v,T)$ when a the selective control is active only on a region $B_\varrho(x^*_\tau)$ defined in \eqref{eq:varap}, which is represented by the small circle overlapping in the space domain. The top and bottom pictures represent the action of the control, respectively  for $\mathcal{K}_\xi[f](x,v)$, and $\mathcal{K}_\zeta[f](x,v)$, with fixed penalization $\kappa = 0.25$, and for different values of the radius $\varrho$.  The value of the total costs are reported below each picture.}\label{fig:7}
\end{figure}

\begin{figure}[ht]
\centering
\begin{tabular}{@{}c@{\hspace{1mm}}c@{\hspace{1mm}}c@{\hspace{1mm}}c@{}}
\hline
&$\varrho = 1.5$ & $\varrho = 2.5$& $\varrho = 5$\\
\hline
\sidecap{$L[f,\cdot](t)$} 
&
\includegraphics[trim=30 10 40 20,clip,width=0.25\textwidth]{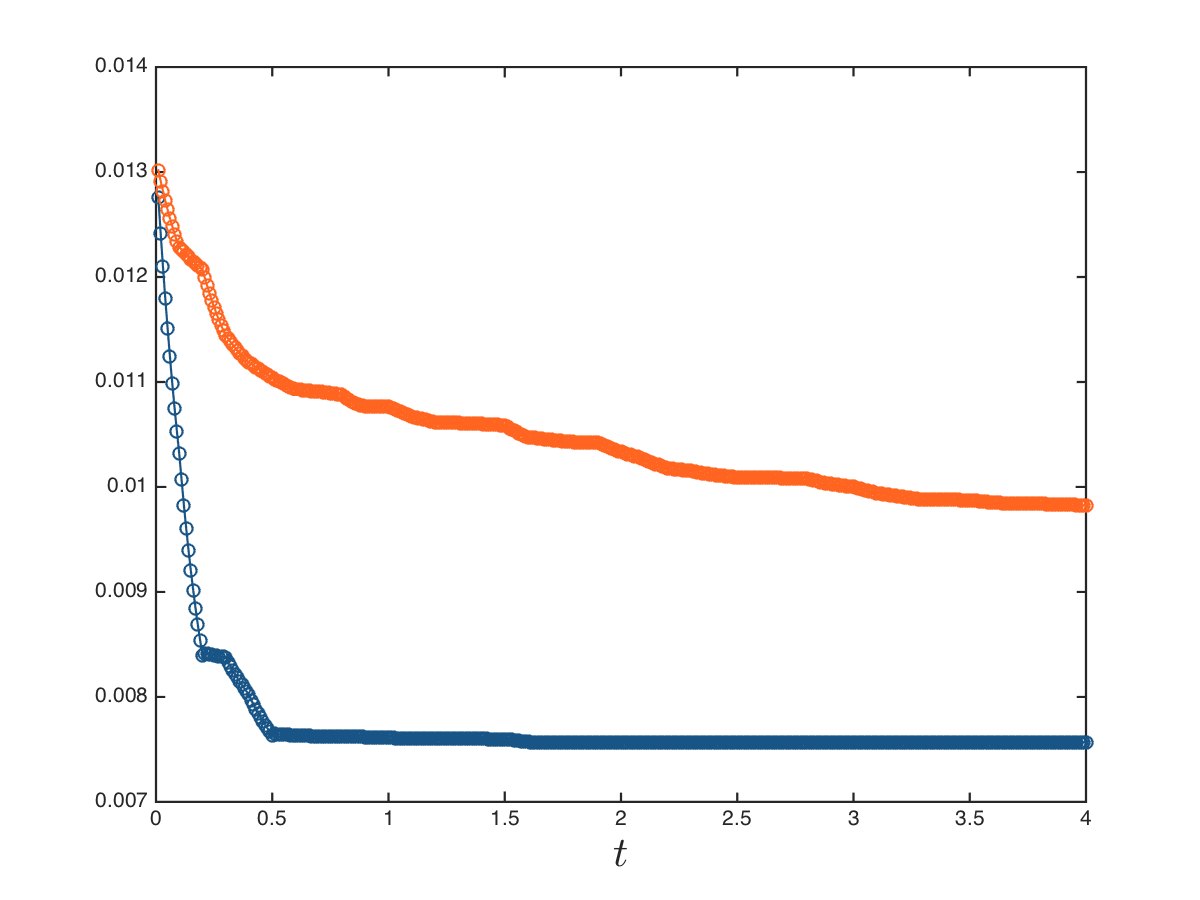}&
\includegraphics[trim=30 10 40 20,clip,width=0.25\textwidth]{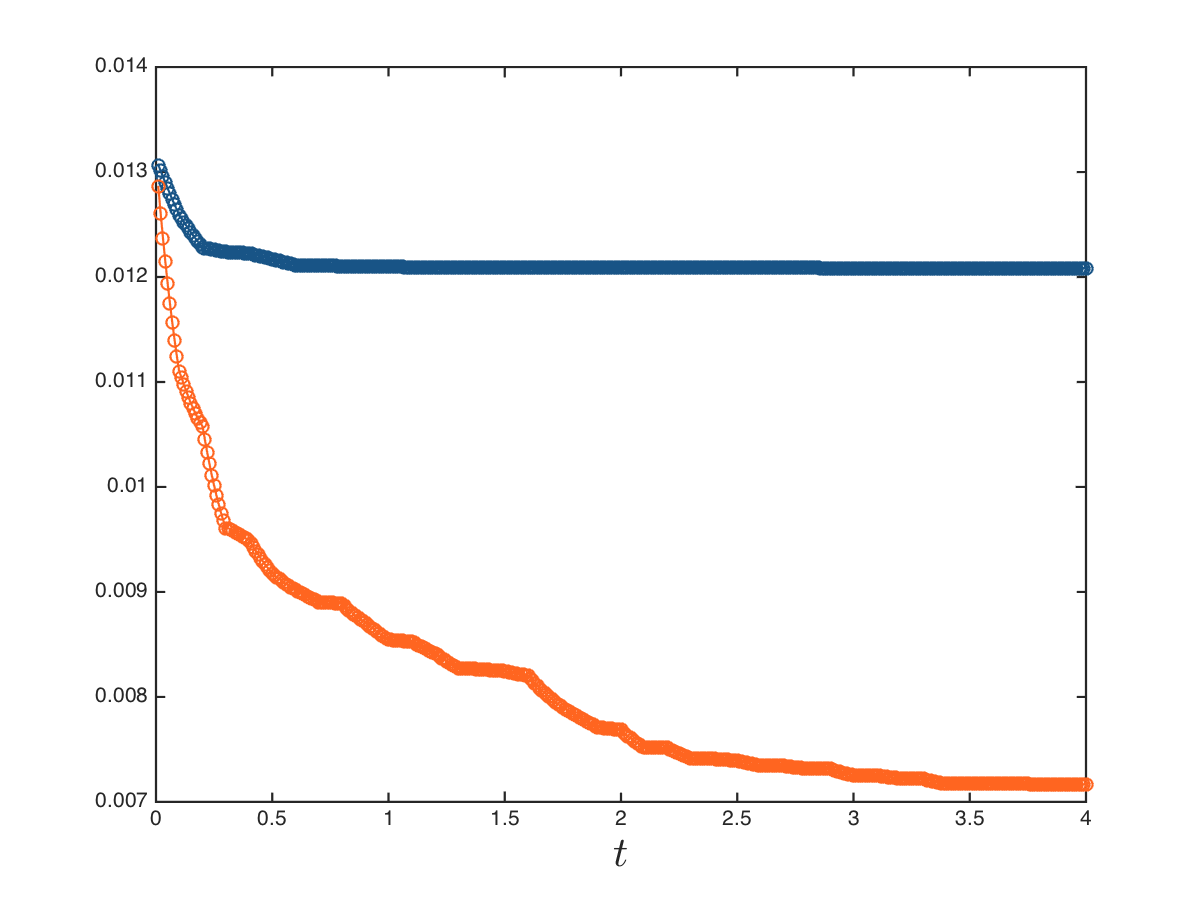}&
\includegraphics[trim=30 10 40 20,clip,width=0.25\textwidth]{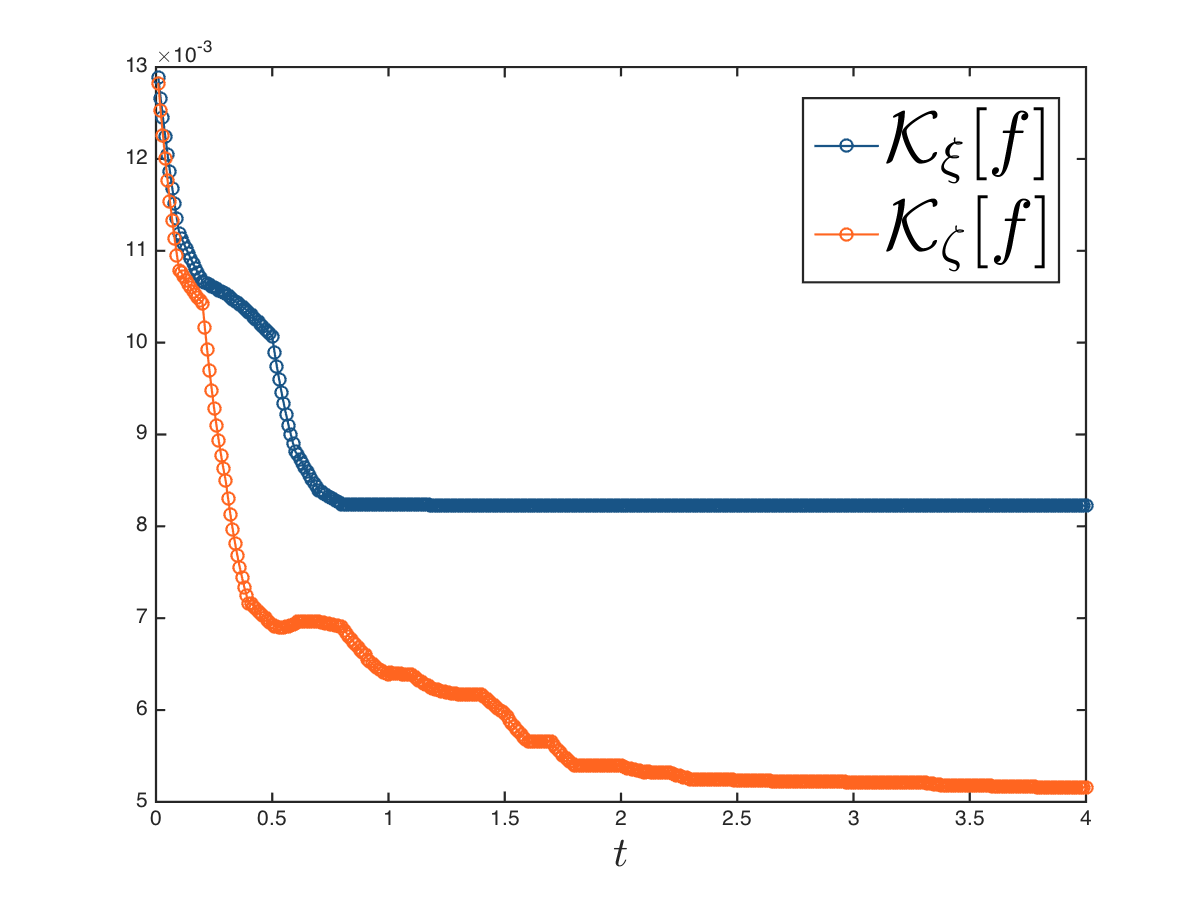}
\\
 \hline
\end{tabular}
\caption{Evolution of the running cost $L[f,\cdot](t)$ at variance of the radius $\varrho=\{1.5,2.5,5\}$, with fixed $\kappa = 0.25$, and with different controls respectively \eqref{eq:ctrl1} and \eqref{eq:ctrl2}.}\label{fig:8}
\end{figure}

%

%

\section*{Conclusions}
We propose a new modeling setting to include a selective action of the control in flocking systems. We showed that a numerical technique procedure, based on model predictive control, furnishes a consistent discretization of a constrained alignment dynamics related to the optimal control problem. The mean-field limit derivation of the controlled particle system has been presented, and proved rigorously in some cases. A fast numerical scheme, based on the stochastic approximation of the interaction operator, has been implemented. Finally several simulations show non trivial behavior of the selective control dynamics, and consolidate the efficacy of the proposed modeling setting. 
\section*{Acknowledgements}
GA acknowledges the ERC-Starting Grant project {\em High-Dimensional
Sparse Optimal Control} (HDSPCONTR).
\appendix
\section{Derivation of the forward feedback control system}\label{app:A}
Let us introduce the parameters $\alpha=\Delta t/N$, and $\beta=\Delta t/\nu$, then we can write system \eqref{eq:Dfwdcontr2} 
as follows
\begin{align}\label{eq:Ffwd}
& v^{n+1}_i=v^n_i+\alpha\sum_{j=1}^{N}H^n_{ij}(v^n_j-v^n_i)-\alpha\beta \sum_{j=1}^N(v_j^{n+1}-\bar v)\Qs^n_j\Qs^n_i,
\end{align}
and in matrix-vector notation as follows
\begin{align}
& \left(\Id + \alpha\beta\QQ^n\right) \vv^{n+1}=\vv^n-\alpha \LL^n\vv^n+\alpha\beta\QQ^n\eee \bar v
\label{eq:VFfwd}
\end{align}
where $\LL^n$ is the laplacian matrix at time $n$, such that $(\LL^n)_{ii} = \sum_{j\neq i}H_{ij}^n$ and $(\LL^n)_{ij} = -H_{ij}^n$, and 
$(\QQ^n)_{ij} = \Qs^n_i\Qs^n_j$,,
and with the following conventional notations, $$\eee=(1,1,\ldots,1)^T,\, \vv^n = (v_1^n,v_2^n,\ldots,v_N^n)^T.$$
We remark that 
 $\A^n:=\left(\Id + \alpha\beta\QQ^n\right)$ enjoys the following properties, namely its inverse is
\begin{align*}
&(\A^n)^{-1}=\Id-\frac{ \alpha\beta}{1+ \alpha\beta\textrm{trac}(\QQ^n)}\QQ^n,
\end{align*}
therefore computing the matrix product we obtain the fully explicit scheme
\begin{equation}
\begin{aligned}
&\vv^{n+1}=\,\vv^n-\alpha\LL^n\vv^n+\frac{ \alpha\beta}{1+ \alpha\beta\textrm{trac}(\QQ^n)}\left(\left(\left(\Id+\alpha\beta\BB^n\right)\QQ^n\eee \bar v-\QQ^n\vv^n\right)+{ \alpha}\QQ^n\LL^n\vv^n\right),
\end{aligned}
\end{equation}
where we defined $\BB^n = \textrm{trac}(\QQ^n)\Id-S^n$.
Expressing again the parameters $\alpha$ and $\beta$ in terms of $\Delta t$, $N$,  and scaling the regularization parameter $\nu = \kappa \Delta t$ we have consequently 
$\beta = 1/\kappa$, which allows to write the previous system as in \eqref{fullDsystem1}.

\section{Binary interaction algorithm}\label{app:B}

We want to approximate the interaction operator in the mean-field model  \eqref{eq:MFmodel} by 
%
 considering pairwise interactions between agents, described by a Boltzmann-like equation under a proper scaling, \cite{AlbiPareschi2013ab, MR2744704}.
Thus we introduce the binary interactions among  two agents $(i,j)$ as follows
\begin{equation}\label{Binary}
\begin{aligned}
&v_i^{*}= v^n_i+\alpha H^n_{ij}(v^n_j-v^n_i)+\alpha K^n_{ij}\\
&v_j^{*}= v^n_j+\alpha H^n_{ji}(v^n_i-v^n_j)+\alpha K^n_{ji}\\
\end{aligned}
\end{equation}
where $(v^*_i,v^*_j)$ are the post interaction velocities, 
$\alpha$ is the interaction strength parameter, and term $K^n_{ij}$ represents the  {\em selective feedback control}, defined as follows,
\begin{equation}
K^n_{ij} =
\begin{cases}
 \displaystyle\frac{2}{2\kappa + \Delta t ((S^n_i)^2+(S^n_j)^2)}\left(\bar v-v^n_j\right)\Qs^n_i\Qs^n_j,
 \\\\
 \displaystyle\frac{1}{\kappa + \Delta t}\left(\bar v-v^n_i\right)B^n_i.
 \end{cases}
\end{equation}

 We denote with  $f^{n}=f(t^n,x,v)$ the non-negative empirical density, described by a sample of $N_s$ particles as follows
\begin{equation}\label{emp}
f^n(x,v) = \frac{1}{N_s}\sum_{i=1}^{N_s}\delta(x-x_i^n)\delta(v-v_i^n).
\end{equation}
Then,  the evolution of the binary interactions is described by the following time discrete version of a Boltzmann-like equation 
\begin{align}
&f^{n+1} =f^n+ \Delta t\lambda\mathcal{Q}^n\label{eq:EqBoltz}\\
&\,\mathcal{Q}^n= \int_{\mathbb{R}^{2d}}\left(\frac{1}{\mathcal{J}}f^n_*f^n_*-f^nf^n\right)\,dw\,dy,\label{eq:IntBoltz}
\end{align}
where $\mathcal{Q}^n=\mathcal{Q}(f^n,f^n)$ is the interaction operator,  $\mathcal{J}(x,v;y,w)$ the Jacobian of the binary transformation \eqref{Binary}, and $f^n_*=f^n(x_*,y_*)$ depicts the {\em pre-interaction}  density such that $(x_*,v_*)\to(x,v)$ after interaction \eqref{Binary}.

In order to see the equivalence of consistency of this numerical scheme, we show that under a {\em grazing interaction} scaling the operator $\mathcal{Q}^n$ is a first order approximation of the non-linear friction operator in \eqref{eq:MFkinetic1}.
 Let us introduce the test function $\varphi(x,v)\in C^2(\mathbb{R}^{2d};\mathbb{R})$, thus we can reformulate \eqref{eq:IntBoltz} in weak form as follows
 \[
\lambda\langle\mathcal{Q}^n,\varphi \rangle= \lambda\int_{\mathbb{R}^{4d}}(\varphi^*-\varphi)f^nf^n\,dwdy\,dvdx,
\]
where $\varphi^*=\varphi(x,v^*)$ and $\varphi=\varphi(x,v)$. Expanding around $v^*-v$, thanks to \eqref{Binary} we have
\begin{equation*}
\begin{aligned}
&\lambda\alpha\int_{\mathbb{R}^{4d}}\nabla_v\varphi\cdot[H(x,y)(w-v)+K(x,v)]f^nf^n\,dwdy\,dvdx+\lambda \alpha^2R[\varphi],
\end{aligned}
\end{equation*}
where $R[\varphi]$ is the bounded remainder of the Taylor expansion.
Introducing the scaling 
$\lambda = 1/\varepsilon$, $\alpha = \varepsilon$, as $\epsi\to 0$ it follows that $\lambda\alpha = 1$ and $\lambda\alpha^2R[\varphi]\to 0$, see \cite{AlbiPareschi2013ab, MR2744704}.  Integrating by parts and reverting back the equation in strong form we finally have
\begin{equation*}
\begin{aligned}
f^{n+1}  = f^{n}-&\Delta t\nabla_v\cdot\left[\int_{\mathbb{R}^{2d}}(H(x,y)(w-v)+K(x,v)f^nf^n\,dydw\right],\\
\end{aligned}
\end{equation*}
such that for $\Delta t \to 0$ it converges to the mean-field equation \eqref{eq:MFkinetic1} in absence of the transport term.

\begin{rmk}
The algorithmic procedure proposed has its roots in the classical Monte Carlo methods of kinetic theory \cite{AlbiPareschi2013ab}.
The full numerical scheme consists in solving iteratively two steps:  transport and interactions,
\begin{align}
&f^{\star} = f^n + \Delta t v\cdot\nabla_x f^n,\tag{T}\\
&f^{n+1} = \left(1-\tau_\varepsilon\right)f^\star + \tau_\varepsilon\mathcal{Q}_\varepsilon^+(f^\star,f^\star)\tag{I}
\end{align}
where $\tau_\varepsilon = \Delta t/\varepsilon$, and $\mathcal{Q}_\varepsilon^+$ represents the gain part of \eqref{eq:IntBoltz}, which accounts the binary exchange of \eqref{Binary}. To preserve the positivity of $f^{n+1}$ the algorithm requires that $\Delta t\leq\varepsilon$, which in general is restrictive since $\epsi$ has to be small. On the other hand, the interaction operator \eqref{eq:IntBoltz} is solved linearly with $O(N_s)$ operations against the $O(N_s^2)$ cost of the direct evaluation. Therefore, the algorithms is efficient whenever $\varepsilon \gg 1/N_s$. For further discussion on the analysis and convergence of the method we refer to \cite{AlbiPareschi2013ab}.
\end{rmk}


\begin{thebibliography}{99}
\bibitem{ABCK15}
G.~Albi, M.~Bongini, E.~Cristiani, and D.~Kalise. 
\newblock Invisible control of self-organizing agents leaving unknown environments. 
\newblock {\em SIAM J. on App. Math. }, 7(4):1219-1763, 2016.

\bibitem{AHP13}
G.~Albi, M.~Herty, and L.~Pareschi.
\newblock Kinetic description of optimal control problems in consensus
  modeling.
\newblock {\em Comm. Math. Sci.}, 13(6):1407--1429, 2015.

\bibitem{AlbiPareschi2013ab}
G.~Albi and L.~Pareschi.
\newblock Binary interaction algorithms for the simulation of flocking and swarming dynamics.
\newblock {\em Multiscale Model. Simul.}, 11(1):1--29, 2013.


\bibitem{APZ14}
G.~Albi, L.~Pareschi, and M.~Zanella. 
\newblock Boltzmann type control of opinion consensus through leaders. 
\newblock {\em Phil. Trans. A Math Phys. Eng. Sci.}, 13:372(2028), 2014.

\bibitem{APZ15}
G.~Albi, L.~Pareschi, and M.~Zanella. 
\newblock Uncertainty quantification in control problems for flocking models. 
\newblock {\em Mathematical problems in Engineering}, 2015.

\bibitem{BKF14}
M.~Bongini, D.~Kalise, and M.~Fornasier. 
\newblock (Un)conditional consensus emergence under perturbed and decentralized feedback controls.
\newblock {\em NETCO 2014-New Trends in Optimal Control}. 2014.

\bibitem{BFRS15} M.~Bongini, M.~Fornasier, F.~Rossi, and F.~Solombrino, 
\newblock Mean-field Pontryagin maximum principle, submitted, (2015).

\bibitem{bofo13}
{ M.~Bongini and M.~Fornasier},
\newblock Sparse stabilization of dynamical
  systems driven by attraction and avoidance forces.
  \newblock {\em Netw. Heterog. Media}, 9 (2014), pp.~1--31.

\bibitem{BS12}
N.~Bellomo and J.~Soler.
\newblock On the mathematical theory of the dynamics of swarms viewed as
  complex systems.
\newblock {\em Math. Models Methods Appl. Sci.}, 22(suppl. 1):1140006, 29,
  2012.

\bibitem{BFY:13}
A.~Bensoussan, J.~Frehse, and P.~Yam.
\newblock {\em Mean Field Games and Mean Field Type Control Theory}.
\newblock Series: SpringerBriefs in Mathematics, New York, 2013.

\bibitem{BW14}
A.~Borz\`i and S.~Wongkaew.
\newblock Modeling and control through leadership of a refined flocking system.
\newblock {\em  Math. Models Methods Appl. Sci.}, 25, 255, 2015.

\bibitem{CaBo:04}
E.~Camacho and C.~Bordons.
\newblock {\em Model predictive control}.
\newblock Springer, USA, 2004.


\bibitem{CCR11}
J.~A. Ca{\~n}izo, J.~A. Carrillo, and J.~Rosado.
\newblock A well-posedness theory in measures for some kinetic models of
  collective motion.
\newblock {\em Math. Models Methods Appl. Sci.}, 21(3):515--539, 2011.


\bibitem{CFPT12}
M.~Caponigro, M.~Fornasier, B.~Piccoli, and E.~Tr\'elat.
\newblock Sparse stabilization and control of alignment models.
\newblock {\em submitted}, 2012.


\bibitem{CCH13}
J.~A. Carrillo, Y.-P. Choi, M.~Hauray, and S.~Salem.
\newblock {Mean-field limit for collective behavior models with sharp sensitivity regions}.
\newblock preprint, 2015.



\bibitem{MR2744704}
J.~A. Carrillo, M.~Fornasier, G.~Toscani, and F.~Vecil.
\newblock Particle, kinetic, and hydrodynamic models of swarming.
\newblock In {\em Mathematical modeling of collective behavior in
  socio-economic and life sciences}, Model. Simul. Sci. Eng. Technol., pages
  297--336. Birkh\"auser Boston Inc., Boston, MA, 2010.
%

\bibitem{CuckerSmale1}
F.~Cucker and S.~Smale.
\newblock Emergent behavior in flocks.
\newblock {\em IEEE Trans. Automat. Control}, 52(5):852--862, 2007.


\bibitem{DHL15} P.~Degond, M.~Herty, 
\newblock Meanfield games and model predictive control. 
\newblock \emph{preprint}, 2014.

\bibitem{DLR} P.~Degond, J.-G.~Liu, C.~Ringhofer. 
\newblock 
Large-scale dynamics of mean-field games driven by local Nash equilibria. 
\emph{Journal of Nonlinear Science}, 24(1): 93--115, 2014.

\bibitem{FS}
M.~Fornasier and F.~Solombrino.
\newblock Mean-field optimal control.
\newblock {\em ESAIM Control Optim. Calc. Var.}, 20(4):1123--1152, 2014.


\bibitem{FoHaTo:11}
M.~Fornasier, J.~Haskovec, and G.~Toscani.
\newblock Fluid dynamic description of flocking via {P}ovzner-{B}oltzmann
  equation.
\newblock {\em Physica D}, 240:21--31, 2011.

\bibitem{FPR14}
M.~Fornasier, B.~Piccoli, and F.~Rossi.
\newblock Sparse mean-field optimal control.
\newblock {\em Phil. Trans. A Math Phys. Eng. Sci.}, 13:372(2028), 2014.

\bibitem{GP}
V.~Gazi and K.~Passino.
\newblock Stability analysis of swarms.
\newblock {\em IEEE Trans. Auto. Control}, 48:692--697, 2003.

\bibitem{HHK10} 
S.~Y. Ha, T.~Ha and J.~Kim.
\newblock Emergent behavior of a Cucker-Smale type particle model with nonlinear velocity couplings.
\newblock {\em IEEE Trans. Automatic Control.} 55, 1679-1683, 2010.

\bibitem{H13} J.~Haskovec.
\newblock Flocking dynamics and mean-field limit in the {C}ucker-{S}male-type model with topological interactions.
\newblock {\em Physica D: Nonlinear Phenomena}, 261, 42-51, 2013.

\bibitem{MayneMichalska1990aa}
D.~Q. Mayne and H.~Michalska.
\newblock Receding horizon control of nonlinear systems.
\newblock {\em IEEE Trans. Automat. Control}, 35(7):814--824, 1990.

\bibitem{MJ07}
N.~Moshtagh and A.~Jadbabaie
\newblock Distributed geodesic control laws for flocking of nonholonomic agents.
\newblock {\em IEEE Transactions on Automatic Control},  52(4) 681-686, 2007.

\bibitem{MNJD09}
N.~Moshtagh, N.~Michael, A.~Jadbabaie, and K.~Daniilidis.
\newblock Vision-based, distributed control laws for motion coordination of
  nonholonomic robots.
\newblock {\em Robotics, IEEE Transactions on}, 25(4):851--860, 2009.


\bibitem{MoTa:13}
S.~Motsch and E.~Tadmor.
\newblock Heterophilious dynamics enhances consensus.
\newblock {\em SIAM review},  56(4):577--621, 2013.

%
  
\bibitem{olfati2007consensus}
R.~Olfati-Saber and R. M.~Murray.
\newblock Consensus problems in networks of agents with switching topology and time-delays.
\newblock {\em  IEEE Transactions on Automatic Control}, 49(9), 1520--1533, 2004.
  2013.



\bibitem{PT:13}
L.~Pareschi and G.~Toscani.
\newblock {\em Interacting multi-agent systems. {K}inetic equations \& {M}onte
  {C}arlo methods}.
\newblock Oxford University Press, USA, 2013.

\bibitem{PRT:15}
B.~Piccoli, F.~Rossi and E.~Tr{\'e}lat.
\newblock Control to Flocking of the Kinetic Cucker--Smale Model.
\newblock {\em SIAM Journal on Mathematical Analysis}, 47(6), {4685--4719}, 2015.


\bibitem{Sontag1998aa}
E.~D. Sontag.
\newblock {\em Mathematical control theory}, volume~6 of {\em Texts in Applied
  Mathematics}.
\newblock Springer-Verlag, New York, second edition, 1998.
\newblock Deterministic finite-dimensional systems.
%

\bibitem{Vicsek12}
T.~Vicsek and A.~Zafeiris.
\newblock Collective motion.
\newblock {\em Physics Reports}, 517(3--4):71 -- 140, 2012.

\end{thebibliography}
\end{document}